\theoremstyle{plain}
\newtheorem{theorem}{Theorem}[section]
\newtheorem{cor}[theorem]{Corollary}
\newtheorem{lem}[theorem]{Lemma}
\newtheorem{prop}[theorem]{Proposition}
\newtheorem{rem}[theorem]{\it Remark}
\def\F{X}
\def\G{Y}
\def\U{p}
\def\V{n}
\def\2{NP$_2$}
\def\5{\sqrt5}
\def\6{\frac1\5}
\def\8{\infty}
\def\9{\frac{2\pi}3}
\def\GG{\mathbb{G}}
\def\Gr{\GG\mathrm{r}}
\def\+{\!+\!}
\def\-{\!-\!}
\def\con{\;\cong\;}
\def\npc{\mu}
\def\oS{\ol S}
\def\de{\delta}
\def\vp{\varphi}
\def\cH{\mathcal{H}}
\def\cD{\mathcal{D}}
\def\cL{\mathcal{L}}
\def\cR{\mathcal{R}}
\def\cO{\mathcal{O}}
\def\tg{h}
\def\tP{\tilde{P}}
\def\tr{\mathrm{tr}}
\def\Z{\mathbb{Z}}
\def\fS{\lower2pt\hbox{\large$\mathfrak{S}$}}
\def\dt{\frac\partial{\partial t}}
\def\diag{\mathrm{diag}}
\def\op{\oplus}
\def\ot{\otimes}
\def\so{\mathfrak{so}}
\def\na{\nabla}
\def\ka{\kappa}
\def\ol{\overline}
\def\ga{\gamma}
\def\la{\lambda}
\def\vt{\vartheta}
\def\vep{\varepsilon}
\def\La{\Lambda}
\def\Ga{\Gamma}
\def\Si{\Sigma}
\def\Th{\Theta}
\def\Up{\Upsilon}
\def\Ch{\raise2pt\hbox{$\chi$}}
\def\ow{\ol\w}
\def\oa{\ol a}
\def\oq{\ol q}
\def\oal{\ol\alpha}
\def\alp{\alpha}
\def\fm{\mathfrak{m}}
\def\w{\omega}
\def\W{\Omega}
\def\si{\sigma}
\def\half{{\ts\frac12}}
\def\C{\mathbb{C}}
\def\CP{\mathbb{CP}}
\def\H{\mathbb{H}}
\def\RP{\mathbb{RP}}
\def\R{\mathbb{R}}
\def\su{\mathfrak{su}}
\def\spin{\mathit{Spin}}
\def\ge{\geqslant}
\def\Im{\mathrm{Im}\,}
\def\Re{\mathrm{Re}\,}
\def\lra{\longrightarrow}
\def\lto{\leadsto}
\def\={\ =\ }
\def\y{\\[3pt]}
\def\yy{\\[7pt]}
\def\yyy{\\[15pt]}
\def\kr{\kern-1pt}
\def\ns{\kern-5pt}
\def\vs{\vskip10pt}
\def\we{\wedge}
\def\sul{\sum\limits}
\def\ba{\begin{array}}
\def\ea{\end{array}}
\def\be{\begin{equation}}
\def\bel#1{\be\label{#1}}
\def\ee{\end{equation}}
\def\col#1{\left(\kern-4pt\ba{l}#1\ea\kern-4pt\right)}
\def\ds{\displaystyle}
\def\ts{\textstyle}
\begin{document}
\parskip2pt
\parindent16pt
\mathsurround.5pt

\title[3-Sasakian and $G_2$-structures]
      {\large Revisiting 3-Sasakian and $G_2$-structures}

\author{Simon Salamon and Ragini Singhal}

\begin{abstract}
  The algebra of exterior differential forms on a regular 3-Sasakian
  7-manifold is investigated, with special reference to
  nearly-parallel $G_2$ 3-forms. This is applied to the study of
  3-forms invariant under cohomogeneity-one actions by $SO(4)$ on the
  7-sphere and on Berger's space $SO(5)/SO(3)$.
\end{abstract}

\maketitle

\vspace{-30pt}
{\small\tableofcontents}
\vspace{-30pt}

\section*{Introduction}

Let $M$ be an oriented self-dual Einstein 4-manifold $M$ with positive
scalar curvature. The bundle of 2-forms \bel{+-}
\La^2T^*M=\La^+\op\La^-\ee splits into self-dual (SD) and
anti-self-dual (ASD) subbundles, in line with the double covering \[
SO(4)\to SO(3)^+\times SO(3)^-\] of Lie groups. Our hypothesis implies
that $\La^-$ has constant curvature in the sense of \eqref{PW}
below. The reason for focussing on $\La^-$ is that (with its standard
orientation and Fubini-Study metric) the projective plane $\CP^2$
satisfies the hypothesis. Much of the theory extends to orbifolds, and
sources of examples include quaternion-K\"ahler quotients
\cite{GL,De1}, and the $SO(3)$-invariant orbifolds discovered by
Hitchin \cite{H}. The latter are parametrized by an integer $k\ge3$
and the first two examples are $S^4$ and $\CP^2/\Z_2$.

The (total space of the) $SO(3)^-$ principal bundle $P^-$ associated
to $\La^-$ is 3-Sasakian \cite{K}, and therefore admits an Einstein
metric and a family of nearly-parallel $G_2$ (for short, \2)
structures. An equivalent way of expressing the 3-Sasakian condition
is to say that the Riemannian cone $P^-\!\times\R^+$ is hyperk\"ahler,
meaning that it has a metric with holonomy $Sp(2)$
\cite{Bae}. However, it is well known that a squashing of the fibres
of $P^-$ produces a second Einstein metric and canonical \2 structure
whose cone has holonomy \emph{equal} to $\spin(7)$.

Now suppose (restricting to an open set if necessary) that $M$ is
spin, so that its structure group lifts to $\spin(4)\cong SU(2)^2$.
The negative spin bundle $\Si^-$ can be regarded as a quaternionic
line bundle over $M,$ and admits a family of metrics with $\spin(7)$
holonomy, all asymptotic to the cone over (a $2:1$ cover of) the
squashed \2 manifold \cite{BS}. An initial purpose of this article is
to relate well-known constructions of the relevant differential forms
on these spaces, one via a $\spin(7)$ 4-form on a vector bundle, the
other via 3-Sasakian 1-forms on a principal bundle.
This is motivated by the situation in which $M$ itself admits an
action by $SO(3)$ with 3-dimensional orbits. In this case, $P^-$ or
its double cover $\tP^-$ will admit an action by $SO(4)$ or $SU(2)^2$
of cohomogeneity one, preserving the $G_2$ structures. This happens
when $M$ is $S^4$ or $\CP^2,$ and more generally for the Hitchin
orbifolds.

We shall make this explicit for the 7-sphere $S^7\cong\tP^-\!S^4,$ in
order to express its squashed \2 structure in canonical form. The
result is consistent with a known expression for the round metric on
$S^7$ subject to $SO(4)$ and $\Z_3$ symmetry \cite{Z}. The same
technique is applied to the isotropy irreducible space
$B^7=SO(5)/SO(3),$ which Berger showed has positive sectional
curvature \cite{B}. This space also has a homogeneous \2 structure
\cite{S}, a cohomogeneity-one action by $SO(4)$ \cite{PV}, and is
diffeomorphic to $\tP^+$ for a non-trivial Hitchin orbifold
\cite{GWZ}. Although not our main concern, exhaustive classifications
of associative submanifolds in $B^7$ and in squashed 3-Sasakian
manifolds are undertaken in \cite{BM2,BM1} using twistor
correspondences, pursuing work in \cite{L,K} for $S^7$.

Paying debts to this previous mountain of work, our survey highlights
the distinction between the homogeneous \2 structures on $S^7$ and
$B^7$, when expressed in terms of the local differential geometry on
an interval times $SO(4)$. The choice of geodesics orthogonal to the
orbits allows one to construct bases $(\F_i)$ and $(\G_i)$ of
invariant 1-forms that enable the respective 3-forms to be built up
using upper triangular $2\times2$ matrices; see Corollaries \ref{FF}
and \ref{GG}. These bases ultimately arise from different conjugacy
classes of 3-dimensional subgroups of $SO(4)$, and different group
diagrams for the cohomogeneity-one actions.

Just as a hypersurface of a $G_2$ holonomy manifold acquires a
so-called half-flat $SU(3)$ structure, so a hypersurface of a \2
manifold has an analogous nearly-half-flat (NHF) property. There is a
sense in which a nearly-K\"ahler manifold has both properties. We
conclude with a brief investigation of induced structures on
hypersurfaces of $S^7$ and $B^7$, and we show that constants defining
the homogeneous \2 structures are to some extent determined by the NHF
condition. The results of this section form part of a broader project
to describe exceptional structures in dimensions 6 and 7.

\section{Notation}\label{not}
 
In the first part of this paper, we shall adopt the technique of \cite{BS},
which relies on quaternion-valued differential forms on the total
spaces of $\La^-$ and $\Si^-,$ though only the latter will concern
us.

The Riemannian metric of $M^4$ will be denoted by $g_4$ and its positive
scalar curvature by $12\ka,$ where $\ka$ is another constant. The
sphere $S^n(r)$ of radius $r$ has scalar curvature $n(n-1)/r^2,$ so
$\ka=1$ for $S^4(1)$. {\bf From now on, we shall scale $g_4$
  so that $\ka=1$.}

The induced connection on $\Si^-$ is defined by a vector-valued 1-form
\[ \phi = i\phi_1+j\phi_2+k\phi_3\]
on the total space of $\Si^-$. We shall refer to a differential form
with values in $\Im\H$ as `vector valued', as opposed to
`quaternion-valued' (for $\H$). Thus, $\bar\phi=-\phi,$ where a bar
denotes quaternionic conjugate. The curvature of $\Si^-$ is
\bel{Phi}\ba{rcl} \Phi &=& d\phi+\phi\we\phi\y &=&
i(d\phi_1+2\phi_{23})+i(d\phi_2+2\phi_{31})+i(d\phi_3+2\phi_{12})
\ea\ee where $\phi_{23}=\phi_2\we\phi_3$. See \cite[p\,841]{BS}.

The choice of a unit section of $\Si^-$ determines a local orthonormal
basis of ASD 2-forms via the isomorphism $S^2\Si^-\con(\La^2_-)_c$ of
complex vector spaces or bundles.  These 2-forms are the
components of the tautological vector-valued 2-form
\[ \half(\ow\we\w)=i\W_1+i\W_2+k\W_3=\W.\]
We use a Greek letter without an index to stand for a quantity with
values in $\H$ that encodes all the individual components.
Scalings are fixed by the existence pointwise of an orthonormal basis
$\{\w_0,\w_1,\w_2,\w_3\}$ of 1-forms for $g_4$ such that
\bel{0123} \W_k \= \w_{0k}-\w_{ij}\ee
for $(ijk)=(123)$.

It follows from \eqref{Phi} that \bel{dW} d\Phi=-2\phi\we\Phi\ee where
$\we$ combines multiplication of imaginary quaternions (cross product)
with wedging. With the notation above, the Einstein ASD condition
becomes \bel{PW} \Phi=\half\ka \W\ee as stated in \cite[p\,842]{BS},
although we are assuming throughout that $\ka=1$.

Now let
\bel{a} a=a_0+a_1i+a_2j+a_3k\ee
be a quaternionic coordinate on the fibre of $\Si^-$.  Note that
$\oa a=a\oa=R$ is the radius squared (denoted $r$ in
\cite[p\,846]{BS}). Set \bel{alp} \alp = da-a\phi =
\alp_0+i\alp_1+j\alp_2+k\alp_3.\ee At each point, the annihilator of
$\alp$ is the horizontal subspace determined by the connection. We can
then determine the effect of the action $a\lto aq$ on differential
forms. It yields the transformations $\phi\lto\oq\phi q$ and
$\W\lto\oq\W q$.

Next, consider the quaternion-valued 1-form
\bel{B} \beta\ =\ \oa\alp \= \beta_0+i\beta_1+j\beta_2+k\beta_3\ee
as defined in \cite[p\,846]{BS}. In particular,
\[\beta_1 \= a_0\alp_1-a_1\alp_0-a_2\alp_3+a_3\alp_2\]
is the result of applying a fibrewise ASD almost complex structure to
$\beta_0=\half dR$. However, from now one, we work on $\tP^-,$ so
that $R=1,$ and $\beta$ becomes a vector-valued 1-form. In this setting,\vs

\begin{lem}\label{dbeta}
$d\beta + \beta\we\beta = - 2\,\Im(\phi\we\beta) - \Phi$.
\end{lem}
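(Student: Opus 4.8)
The plan is to compute $d\beta$ directly from the definition $\beta=\oa\alp$ with $R=1$, then re-express everything in terms of $\beta$, $\phi$ and $\Phi$. Since $\beta=\bar a\,\alp$ is a product of a quaternion-valued function and a quaternion-valued 1-form, the Leibniz rule gives $d\beta = d\bar a\we\alp + \bar a\,d\alp$, and I would handle each piece separately using the structure equations already recorded in the excerpt.

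First I would deal with $d\alp$. Differentiating $\alp=da-a\phi$ gives $d\alp = -da\we\phi - a\,d\phi = -da\we\phi - a(\Phi-\phi\we\phi)$ by \eqref{Phi}. Replacing $da=\alp+a\phi$ yields $d\alp = -\alp\we\phi - a\phi\we\phi - a\Phi + a\phi\we\phi = -\alp\we\phi - a\Phi$. Hence $\bar a\,d\alp = -\bar a\,\alp\we\phi - \bar a a\,\Phi = -\beta\we\phi - \Phi$, using $\bar a a=R=1$. Next, on $\tP^-$ we have $\bar a a=1$, so differentiating gives $d\bar a\, a + \bar a\, da = 0$, i.e. $d\bar a = -\bar a\,(da)\,\bar a = -\bar a(\alp+a\phi)\bar a = -\beta\bar a - \bar a a\phi\bar a = -\beta\bar a - \phi\bar a$ (again $\bar a a=1$). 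Therefore $d\bar a\we\alp = -\beta\bar a\we\alp - \phi\bar a\we\alp$. Now $\bar a\we\alp$ is awkward because $\alp$ sits on the right of $\bar a$ whereas $\beta=\bar a\alp$ has it on the right too — here the key observation is that $\bar a\,\alp\,\bar a$ type terms must be massaged: I would instead write $d\bar a\we\alp = (d\bar a)\,\bar a^{-1}\we(\bar a\,\alp)$? No — cleaner is to note $d\bar a\we\alp = -(\bar a\,da\,\bar a)\we\alp$ and then insert $da = a\,\overline{(\bar a\,da)} \cdot(\dots)$; the honest route is $d\bar a\we\alp = \overline{d a}\we\alp$ is false since $\overline{da}=d\bar a$ only formally. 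So instead use $d\bar a\we\alp = -\bar a\,da\,\bar a\we\alp$, and since $\bar a\we\alp$ doesn't simplify, rewrite $\bar a\we\alp = \overline{\bar\alp\,a} = -\overline{\bar\alp\, a}$...

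The cleanest approach, which I would actually carry out, is to use $\alp = da - a\phi$ and $\bar a = a^{-1}$ to get $\beta = a^{-1}da - \phi$, a Maurer–Cartan-type form. Then $d\beta = d(a^{-1}da) - d\phi = -a^{-1}da\,a^{-1}\we da - d\phi = -(\beta+\phi)\we(\beta+\phi) - (\Phi-\phi\we\phi)$, where I substituted $a^{-1}da = \beta+\phi$ and used \eqref{Phi}. Expanding: $d\beta = -\beta\we\beta - \beta\we\phi - \phi\we\beta - \phi\we\phi - \Phi + \phi\we\phi = -\beta\we\beta - \beta\we\phi - \phi\we\beta - \Phi$. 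Finally, since $\beta$ and $\phi$ are both vector-valued (imaginary) 1-forms on $\tP^-$, one has $\beta\we\phi + \phi\we\beta = 2\,\Im(\phi\we\beta)$ — the symmetric part of the quaternionic wedge of two imaginary-valued 1-forms is exactly twice the imaginary part of $\phi\we\beta$ (the real parts cancel in $\phi\we\beta+\beta\we\phi$ up to sign conventions, leaving the $\Im$ piece). This gives $d\beta + \beta\we\beta = -2\,\Im(\phi\we\beta) - \Phi$, as claimed.

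The main obstacle is purely bookkeeping with the non-commutative wedge product: one must be careful that for quaternion-valued 1-forms $\mu,\nu$ the product $\mu\we\nu$ is neither symmetric nor antisymmetric, and that $d(a^{-1}) = -a^{-1}(da)a^{-1}$ has the factors in the right order. The step identifying $\beta\we\phi+\phi\we\beta$ with $2\,\Im(\phi\we\beta)$ requires checking, on imaginary-quaternion-valued 1-forms, that $\overline{\phi\we\beta} = \beta\we\phi$ (since $\overline{\mu\we\nu} = (-1)^{|\mu||\nu|}\bar\nu\we\bar\mu = -\bar\nu\we\bar\mu$ for 1-forms, and $\bar\phi=-\phi$, $\bar\beta=-\beta$ give $\overline{\phi\we\beta} = -(-\beta)\we(-\phi) = -\beta\we\phi$ — so in fact $\phi\we\beta + \beta\we\phi = \phi\we\beta - \overline{\phi\we\beta} = 2\,\Im(\phi\we\beta)$, confirming the identity). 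Once the conventions from \cite{BS} are pinned down so these signs land correctly, the computation is a few lines.
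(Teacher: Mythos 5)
Your final computation is correct, but it takes a genuinely different route from the paper. The paper proceeds by Leibniz on $\beta=\oa\alp$: it first establishes $d\alp=-\alp\we\phi-a\Phi$, then writes $d\beta=d\oa\we\alp+\oa\,d\alp$ and uses the identity $\oal\we\alp=\ol\beta\we\beta=-\beta\we\beta$ before restricting to $R=1$. You instead observe that on $\tP^-$ one has $\oa=a^{-1}$, so $\beta=\oa\,da-\phi$ is the difference of the fibrewise Maurer--Cartan-type form $f=\oa\,da$ (satisfying $df=-f\we f$) and the connection form $\phi$ (satisfying $d\phi=\Phi-\phi\we\phi$); expanding $-(\beta+\phi)\we(\beta+\phi)$ and cancelling $\phi\we\phi$ gives the result, with the same final step $\beta\we\phi+\phi\we\beta=2\,\Im(\phi\we\beta)$, which you verify correctly via $\ol{\phi\we\beta}=-\beta\we\phi$. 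Your route is arguably cleaner and in fact anticipates the form $f=\oa\,da$ that the paper only introduces later in \eqref{f}--\eqref{fff}; its cost is that it is confined to $R=1$ from the outset, whereas the paper's intermediate formula \eqref{dalp} is computed on all of $\Si^-$ and is reused verbatim in the proof of Lemma \ref{dH}. One presentational caveat: the middle of your write-up contains several abandoned false starts (the attempts to manipulate $d\oa\we\alp$ directly); only the final ``cleanest approach'' paragraph constitutes the proof, and the earlier material should be deleted rather than left as exploratory narration.
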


\begin{proof} Using \eqref{Phi}, \eqref{PW}, and \eqref{alp},
\bel{dalp} d\alp
\= -(\alp+a\phi)\we\phi -a d\phi\y
\= -\alp\we\phi - a\Phi.\ee
Therefore 
\[\ba{rcl} d\beta\ =\ d(\oa\alp)
&=& d\oa\we\alp+\oa\,d\alp\y
&=& d\oa\we\alp+\oa(-\alp\we\phi-a\Phi)\y
&=& \oal\we\alp -(\phi\we\beta+\beta\we \phi)-\Phi.
\ea\]
Now restrict to $R=1,$ so that $\beta$ (and, as always, $\phi$) have values
in $\Im\H$. Because of the wedging, the real parts of $\phi\we\beta$ and
$\beta\we\phi$ cancel out. Moreover,
\bel{BB} \oal\we\alp = \oal a\we\oa\alp = \ol
\beta\we\beta=-\beta\we\beta\ee and the result follows.
\end{proof}

The first component of $\beta$ is \bel{dbeta1}
d\beta_1+2\beta_{23} \=
-2(\phi_2\we\beta_3-\phi_3\we\beta_2)-\Phi_1.\ee Note that
$\beta_{23}$ is totally vertical whereas $\Phi_1=\half\ka\W_1$ `lives'
on $M,$ so their exterior derivatives cannot be proportional, which
forces the appearance of the mixed terms on the right-hand side. The
fibrewise ASD property of $d\beta$ is indicated by the presence of
$\oal\we\alp$ rather than the self-dual $\alp\we\oal$.\vs

\section{Spin(7) and $G_2$-structures}

One of the main results in \cite[\S4]{BS}, by now well known, is that
the total space of $\Si^-$ admits a family of metrics with holonomy
equal to $\spin(7)$ and asymptotically conical. The induced Einstein
metric $\tg_7$ on the link of this conical limit therefore underlies a
so-called nearly-parallel $G_2$ structure (our abbreviation is
\2). The latter comprises a 3-form $\vp$ with stabilizer $G_2$ for
which
\bel{NP} d\vp=\npc*\vp\ee
where $21\mu^2/8$ is known to equal the scalar curvature of $\tg_7$
\cite{B}. The metric $\tg_7$ is also said to have weak holonomy $G_2$
\cite{Gr}. We proceed to write down $\vp$ explicitly.

In order to extract useful real-valued differential forms from those
defined in Section 1, we need to exploit the obvious invariants for
the action of $SO(3)$ on $\Im\H\cong\R^3,$ namely inner product and
determinant. In this way, we can define the real 3-forms
\[\ba{rclcl}
\Up &=& \beta_{123} &=& -\frac16\,\beta\we\beta\we\beta\y
\Th &=& \Phi_1\we\beta_1+\Phi_2\we\beta_2+\Phi_3\we\beta_3
    &=& -\Re[\Phi\we\beta]
\ea\]
and the real 4-forms
\[\ba{rclcl}
-d\Up &=& \Phi_1\we\beta_{23}+\Phi_2\we\beta_{31}+\Phi_3\we\beta_{12}
    &=& -\half\Re[\Phi\we\beta\we\beta]\y
\Ch &=& \Phi_1\we\Phi_1+\Phi_2\we\Phi_2+\Phi_3\we\Phi_3
    &=& -\Re[\Phi\we\Phi].
\ea\]
Using these, we can build $G_2$ structures. In particular:\vs

\begin{prop}\label{2.1}
  Let $\la=2/\5$. The 3-form \bel{varphi}\ts \vp \= 2\la\,\Th +
  \la^3\Up\ee defines an \2 structure on $\tP^-$ with $\npc=6/\5$. The
  associated Einstein metric is \bel{gt7}\ts \tg_7 \=
  \frac45\sul_{l=1}^3\beta_i\ot\beta_i + \pi^*g_4.\ee
\end{prop}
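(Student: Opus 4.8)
The plan is to verify directly that the 3-form $\vp=2\la\,\Th+\la^3\Up$ has pointwise stabilizer $G_2$ inducing the claimed metric, and then to check the differential equation \eqref{NP}. First I would set up a local adapted coframe: pick the orthonormal basis $\{\w_0,\w_1,\w_2,\w_3\}$ of $\pi^*$-horizontal 1-forms on $\tP^-$ so that \eqref{0123} holds, and note that $\beta_1,\beta_2,\beta_3$ span the vertical cotangent space (the annihilator of the horizontal distribution after imposing $R=1$). Rescaling, put $e_i=\sqrt{4/5}\,\beta_i$ for $i=1,2,3$ and $e_{3+\mu}=\w_\mu$ for $\mu=0,1,2,3$; then $\tg_7$ in \eqref{gt7} is exactly $\sum e_a\ot e_a$. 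The task is to rewrite $\vp$ in this coframe and recognize it as the standard $G_2$ 3-form. Using $\Up=\beta_{123}=(5/4)^{3/2}e_{123}$ and, via \eqref{PW}, $\Phi_i=\half\W_i=\half(\w_{0i}-\w_{jk})$, one expands $\Th=-\Re[\Phi\we\beta]=\sum_i\Phi_i\we\beta_i$ in terms of the $e_a$; with $\la=2/\5$ the coefficients $2\la$ and $\la^3$ are engineered precisely so that $\vp$ becomes $e_{123}+e_1\we(e_{45}-e_{67})+e_2\we(e_{46}-e_{75})+e_3\we(e_{47}-e_{56})$ (up to the labelling conventions/orientation on $M$), which is a standard $G_2$-form. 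This simultaneously proves the stabilizer is $G_2$ and that the induced metric and orientation are as stated.

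Next I would compute $d\vp$ and $*\vp$ and compare. For $d\Th$, write $\Th=-\Re[\Phi\we\beta]$ and use that $\Phi$ is basic with $d\Phi=-2\phi\we\Phi$ from \eqref{dW}, together with Lemma~\ref{dbeta} for $d\beta$; the $\phi$-terms must cancel (they are the price of working equivariantly and disappear in the $SO(3)$-invariant combination), leaving $d\Th$ expressed through $\Ch=-\Re[\Phi\we\Phi]$ and $-d\Up=-\half\Re[\Phi\we\beta\we\beta]$. Indeed $d\Up$ is already identified in the text with $-\half\Re[\Phi\we\beta\we\beta]$. So $d\vp=2\la\,d\Th+\la^3\,d\Up$ becomes a combination of $\Ch$ and $\Re[\Phi\we\beta\we\beta]$. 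On the other side, $*\vp$ is a 4-form on $\tP^-$ with respect to $\tg_7$; in the adapted coframe $*\vp$ is the standard $G_2$ 4-form, and translating back it is a linear combination of $\Ch$ (the purely horizontal part, $\propto\w_{0123}$ plus cross terms) and $\Phi_i\we\beta_{jk}$ (the mixed part). Matching the two combinations term by term forces a single scalar identity in $\la$; with $\la=2/\5$ it yields $d\vp=\npc*\vp$ with $\npc=6/\5$.

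The main obstacle is the bookkeeping in the middle step: one must be careful that $d\Th$ genuinely closes up within the span of $\{\Ch,\,\Re[\Phi\we\beta\we\beta]\}$ with no leftover $\phi$-valued or $\beta_{ij}\we\beta_k$-type terms, i.e.\ that the $\Im(\phi\we\beta)$ contribution from Lemma~\ref{dbeta} and the $\phi\we\Phi$ contribution from \eqref{dW} cancel in the trace over $\Im\H$. This is where the cross-product structure on $\Im\H$ (the $SO(3)$-invariance, inner product plus determinant) does the work, exactly as flagged in the text. A secondary check is the Hodge star computation: it is cleanest to verify $*\vp$ abstractly in the standard $G_2$ coframe rather than by brute-force contraction, then re-express. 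Once these two points are pinned down, comparing coefficients to read off $\npc=6/\5$ is routine, and consistency with the scalar-curvature formula $21\npc^2/8$ below \eqref{NP} (giving scalar curvature $21\cdot(36/5)/8=189/10$) provides a useful sanity check.
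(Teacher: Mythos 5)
Your proposal is correct and follows essentially the same route as the paper: identify the orthonormal coframe $\{\w_\mu,\la\beta_i\}$ so that $\vp$ is in standard $G_2$ form, compute $d\Th=2\,d\Up-\Ch$ via Lemma~\ref{dbeta} and \eqref{dW} with the $\phi$-terms cancelling in the real trace, and then match $d\vp$ against $*\vp$ (both linear combinations of $d\Up$ and $\Ch$) to force $\la^2=4/5$ and $\npc=3\la$. The only slight imprecision is that the coefficients $2\la$, $\la^3$ give a standard $G_2$ form for \emph{every} $\la$; the specific value $\la=2/\5$ is needed only for the nearly-parallel equation, which your final matching step does correctly extract.
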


\begin{proof}
The calculation of $d\Th$ is a little delicate:
\[\ba{rcl} d\Th
&=& -\Re[d\Phi\we\beta+\Phi\we d\beta]\y
&=& -\Re[-2\phi\we\Phi\we\beta]
    -\Re[\Phi\we(-\beta\we\beta-2\phi\we\beta-\Phi)]\y
&=& 2\,d\Up-\Ch.
\ea\]
Terms in $\phi\we\Phi$ and $\Phi\we\phi$ cancel out because wedging is
commutative but imaginary quaternion multiplication is not.

Recall from \eqref{0123} that $\Phi_i=\half(\w_{0i}-\w_{jk})$ is a
horizontal ASD 2-form, so $\Ch=-\frac32\w_{0123},$ the volume form of
$(M,g_4)$ being $\w_{0123}$. It follows that \eqref{varphi} defines a
$G_2$ structure whose associated metric has an orthonormal basis
\bel{ON}
\w_0,\ \ \w_1,\ \ \w_2,\ \ \w_3,\ \
\la\beta_1,\ \ \la\beta_2,\ \ \la\beta_3\ee
so
\[\ts *\vp \= 2\la^2d\Up-\frac23\Ch.\]
On the other hand,
\[d\vp \= \la(4+\la^2)d\Up - 2\la\Ch\]
so the \2 conditin is satisfied when
\[\npc = \half\la^{-1}(4+\la^2) = 3\la\]
and $\la^2=4/5$. The sign of $\la$ and $\npc$ is merely a matter of
orientation, so we choose them to be positive for this proposition.
The form of $\tg_7$ follows from \eqref{ON}.\end{proof}

The complete metric $g_\Phi$ with holonomy $\spin(7)$ appears on
\cite[p\,848]{BS}. If we take the asymptotic limit and set $r=1,$ we
obtain $\frac15\tg_7$. It is well known that \eqref{gt7} is the
so-called squashed Einstein metric that exists on the $S^3$ (or
$\RP^3$) bundle over $M^4$. The more standard Einstein metric (the
round metric in the case of $\pi\colon S^7\to S^4$) has the same form,
but without the factor $1/5$.

The proof above is modelled on \cite[Prop.~2.5]{GS} from the theory of
3-Sasakian 7-manifolds. Indeed, we shall explain next that the 3-form
\eqref{varphi} coincides with one that can be defined using
globally-defined 1-forms $\eta_1,\eta_2,\eta_3$ dual to the Lie
algebra of $SU(2)$ acting on the fibre of a regular 3-Sasakian
7-manifold. This is accomplished by means of a gauge transformation
that converts the geometry of a vector bundle to that of a principal
bundle.

Modify \eqref{B} by conjugation and (partly for convention) a sign
change: \bel{eta} \eta \= -a\beta\oa \= -\alp\oa=a\oal\ee where $a$ is
a unit quaternion. Retaining the condition $R=1,$ we can write
\[\eta \= i\eta_1+j\eta_2+k\eta_3.\]
In particular,
\[ \eta_1 \= -a_0\alp_1+a_1\alp_0-a_2\alp_3+a_3\alp_2.\] 
Observe that, as a direct consequence of the conjugation, $\eta$ is
{\it invariant} under the action $a\lto aq$ of $SU(2)^-$. Moreover,
$\eta,$ like $\beta,$ annihilates the horizontal distribution on
$\tP^-$. It follows that each real component $\eta_i$ is globally
defined on $\tP^-$. Moreover,
\[ \alp\we\oal = \alp\oa\we a\oal = -\eta\we \eta\]
in analogy to \eqref{BB}, but with a fibrewise self-dual
property. Converting from $\beta$ to $\eta$ furnishes a more succinct
version of Lemma 1.1 on $\tP^-,$ namely\vs

\begin{lem}\label{dH} $d\eta + \eta\we \eta = a\Phi\oa$.\end{lem}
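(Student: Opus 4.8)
The plan is to obtain Lemma~\ref{dH} by transporting Lemma~\ref{dbeta} through the gauge transformation $\eta=-a\beta\oa$, exactly in the spirit of the passage from $\beta$ to $\eta$ already sketched before the statement. First I would differentiate $\eta=-a\beta\oa$ using the Leibniz rule for quaternion-valued forms, being careful about signs coming from wedging odd-degree forms:
\[
d\eta \= -\,da\we\beta\oa \;+\; a\,d\beta\,\oa \;+\; a\,\beta\we d\oa.
\]
Then I would rewrite $da$ and $d\oa$ in terms of $\alp$ and $\phi$ via \eqref{alp}, i.e.\ $da=\alp+a\phi$ and $d\oa=\oal+\phi\oa$ (using $\bar\phi=-\phi$ and the fact that $R=1$ forces $\oa=a^{-1}$), and substitute the expression for $d\beta$ supplied by Lemma~\ref{dbeta}, namely $d\beta=-\beta\we\beta-2\,\Im(\phi\we\beta)-\Phi$.

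The key step is then a bookkeeping exercise: expand everything and recognize that the cross terms reassemble into $\eta\we\eta$ plus $a\Phi\oa$. Concretely, $\alp=a\oal a$ gives $da\we\beta\oa = \alp\we\beta\oa + a\phi\we\beta\oa$, and similarly for the last term; the piece $a(-\beta\we\beta)\oa$ combines with $\alp\we\beta\oa$ and $a\beta\we\oal$ — here one uses $\beta=\oa\alp$ and $\alp\oa=-\eta$, $a\oal=\eta$ — to produce $-\eta\we\eta$ on the left after moving it across. The terms linear in $\phi$ (coming from $da$, from $d\oa$, and the $-2\,\Im(\phi\we\beta)$ in $d\beta$) should cancel: this is the analogue of the cancellation noted in the proof of Lemma~\ref{dbeta} and in Proposition~\ref{2.1}, and it reflects precisely the point of the gauge transformation, namely that conjugating by the connection-defining $a$ removes the connection 1-form $\phi$ from the structure equation. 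What survives from $a(-\Phi)\oa$ is exactly $-a\Phi\oa$, which moves to the right-hand side.

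The main obstacle I anticipate is sign and ordering discipline in the non-commutative, graded setting: $\phi$, $\beta$, $\eta$, $\alp$ are all $1$-forms with values in $\Im\H$, so wedging introduces a sign on transposition while quaternion multiplication introduces another, and one must track both simultaneously (this is the subtlety flagged repeatedly in the excerpt). In particular, verifying that the $\phi$-terms genuinely cancel, rather than doubling, requires using $\bar\phi=-\phi$ together with the graded-commutativity of $\we$ in just the right order; and recognizing $\oal\we\alp$-type expressions as $\pm\eta\we\eta$ versus $\pm\beta\we\beta$ hinges on whether one writes $\alp\oa$ or $\oa\alp$, i.e.\ on the fibrewise self-dual versus anti-self-dual distinction already emphasized. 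Once the signs are pinned down, no deeper idea is needed: the identity is forced by Lemma~\ref{dbeta}, the definition \eqref{eta}, and \eqref{alp}.
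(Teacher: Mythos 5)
Your strategy---differentiating $\eta=-a\beta\oa$ by Leibniz and feeding in Lemma~\ref{dbeta}---does work, but it is a slight detour compared with the paper's proof, which writes $\eta=-\alp\oa$ (a product of only \emph{two} factors) and differentiates directly using \eqref{dalp}, namely $d\alp=-\alp\we\phi-a\Phi$, obtaining
\[
d\eta\=-d\alp\,\oa+\alp\we d\oa\=(\alp\we\phi+a\Phi)\oa+\alp\we(\oal-\phi\oa)\=\alp\we\oal+a\Phi\oa\=-\eta\we\eta+a\Phi\oa
\]
without ever invoking Lemma~\ref{dbeta}. Your version costs an extra Leibniz term and the re-expansion of $d\beta$, but it is a legitimate alternative and illustrates the gauge-transformation philosophy you describe.

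That said, two of the signs you commit to on the page are wrong, and with them the computation fails in exactly the mode you worry about (the $\phi$-terms double rather than cancel). First, since $a,\oa$ are $0$-forms and $\beta$ is a $1$-form, the Leibniz rule applied to $\eta=-a\beta\oa$ gives
\[
d\eta \= -\,da\we\beta\,\oa \;-\; a\,d\beta\,\oa \;+\; a\,\beta\we d\oa,
\]
with a \emph{minus} on the middle term, not the plus you wrote. Second, conjugating \eqref{alp} using $\bar\phi=-\phi$ gives $\oal=d\oa+\phi\oa$, hence $d\oa=\oal-\phi\oa$, not $\oal+\phi\oa$. With these corrections the four $\phi$-terms cancel in pairs: $-a\phi\we\beta\,\oa$ from $-da\we\beta\,\oa$ against $+a\phi\we\beta\,\oa$ from $-a\,d\beta\,\oa$, and $+a\beta\we\phi\,\oa$ from $-a\,d\beta\,\oa$ against $-a\beta\we\phi\,\oa$ from $a\beta\we d\oa$. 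Moreover the two $\beta\we\beta$-type terms cancel outright, since $\alp=a\beta$ (as $a\oa=R=1$) gives $-\alp\we\beta\,\oa+a\beta\we\beta\,\oa=0$; they do \emph{not} contribute to $\eta\we\eta$ as your sketch suggests. What survives is $a\beta\we\oal+a\Phi\oa=\alp\we\oal+a\Phi\oa=-\eta\we\eta+a\Phi\oa$, using the identity $\alp\we\oal=\alp\oa\we a\oal=-\eta\we\eta$ already recorded before the lemma. Finally, note that $a\oal a=-\alp$, not $+\alp$ as you assert; this does not affect the argument but is another instance of the same sign hazard.
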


\begin{proof} Applying \eqref{alp} and \eqref{dalp},
\[\ba{rcl} d\eta\ =\ -d(\alp\oa)
&=& -d\alp\,\oa +\alp\we d\oa\y
&=& (\alp\we\phi+a\Phi)\oa+\alp\we(\oal-\phi\oa)\y
&=& \alp\we\oal+a\Phi\oa.
\ea\]
As has to be, the individual terms in the last line are unchanged when
$a,\alp,\Phi$ are replaced by $aq,\alp q,\oq\Phi q,$ respectively.
\end{proof}

It is instructive to compare Lemmas 1.1 and 2.2. In analogy to
\eqref{Phi}, set \bel{H} H\= a\Phi\oa\=
i(d\eta_1+2\eta_{23})+i(d\eta_2+2\eta_{31})+i(d\eta_3+2\eta_{12}).\ee
Then $H$ is `3-horizontal', i.e.\ its individual real components are
pointwise the pullbacks of forms on the 4-manifold $M$. However, they
are not invariant by $SU(2)^-,$ only their quaternionic combination
enjoys this property, the price to pay for avoiding analogues of the
terms $\phi\we\beta$ in Lemma 1.1. If we replace the symbol $H_i$ by
$\ol\phi^i$ (the overline is just notation), this coincides with
\cite[p\,50]{GS}.\vs

\begin{rem}\label{S20}\rm
  Conjugation of an imaginary quaternion by the
unit quaternion \eqref{a} describes the double covering $SU(2)^-\lra
SO(3)^-$ by mapping \eqref{a} to the orthogonal matrix
\bel{P} (P_{i,j})=\left(\ba{ccc} a_0^2+a_1^2-a_2^2-a_3^2 &
2(-a_0a_3+a_1a_2) & 2(a_0a_2+a_3a_1)\y 2(a_0a_3+a_1a_2) &
a_0^2-a_1^2+a_2^2-a_3^2 & 2(-a_0a_1+a_2a_3)\y 2(-a_0a_2+a_3a_1) &
2(a_0a_1+a_2a_3) & a_0^2-a_1^2-a_2^2+a_3^2 \ea\right)\ee
that features in many articles on the quaternions, such as
\cite[Prop.~23.12]{ASG}. The nine entries of \eqref{P} are tracefree
quadratics, elements of $S^2_0(\R^4)\cong\La^2_+\ot\La^2_-$.  Indeed,
$P_{i,j}$ equals minus the contraction of standard basis elements of
$\La^+$ and $\La^-$. The triple $(P_{1,i})$ is the hyperk\"ahler
moment map $\R^4\to\so(3)$ for the action of $U(1)$ on $\R^4$ tangent
to $I,$ and is familiar from the Gibbons-Hawking construction. In
components, \bel{deta1}\ts d\eta_1 + 2\eta_{23} = 2\sul_pP_{1,p}\W_p.\ee
\end{rem}

It is easy to compute
$dH$ using \eqref{H} twice. In particular,
\[ dH_1
\= 2(d\eta_2\we\eta_3-\eta_2\we d\eta_3)
\= -2(\eta_2\we H_3-\eta_3\we H_2)\]
which (as it has to) matches \eqref{dW}:
\[ d\Phi_1 \= -2(\phi_2\we\Phi_3-\phi_3\we\Phi_2)\]
All the factors of the right-hand side of the last line are
3-horizontal.\vs

\begin{prop}\label{2.2} Let $\la=2/\5$. Define
\[\ts\vp' \= 2\la\sul_{l=1}^3H_i\we\eta_i + \la^3\eta_{123}.\]
Then $\vp'$ coincides with minus the 3-form \eqref{varphi}, and
therefore gives an equivalent description of the squashed \2 structure
on $\tP^-$.
\end{prop}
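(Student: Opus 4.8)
The plan is to show directly that the gauge transformation $\beta\mapsto\eta$ carries the 3-form $\varphi$ of Proposition~\ref{2.1} to $-\varphi'$ by comparing the two building blocks $\Theta,\Upsilon$ against $\sum_i H_i\wedge\eta_i$ and $\eta_{123}$. First I would record the pointwise relation between the two sets of 1-forms: from \eqref{B} and \eqref{eta} we have $\eta=-a\beta\oa$ at each point (with $R=1$), so conjugation by the unit quaternion $a$ is an $SO(3)$-rotation of the imaginary-quaternion triples $(\beta_1,\beta_2,\beta_3)$ and $(\eta_1,\eta_2,\eta_3)$, realized by the orthogonal matrix $(P_{i,j})$ of Remark~\ref{S20}. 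Since $\Upsilon=\beta_{123}=-\tfrac16\beta\wedge\beta\wedge\beta$ is built from the $SO(3)$-determinant invariant, and conjugation by $a$ has determinant $+1$, I expect $\eta_{123}=-\tfrac16\eta\wedge\eta\wedge\eta$ to equal $\pm\Upsilon$; the sign is $-1$ because of the sign in $\eta=-a\beta\oa$, i.e. the triple $(\eta_i)$ is $-(P\beta)_i$, and an odd number (three) of sign flips gives $\eta_{123}=-\Upsilon$. So the cubic terms match up to the overall minus sign, consistent with the claim $\varphi'=-\varphi$.

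Next I would treat the quadratic term. The key identity is that $H=a\Phi\oa$ (definition \eqref{H}) is obtained from $\Phi$ by exactly the same conjugation by $a$ that sends $\beta$ to $-\eta$. Therefore the scalar $\sum_i H_i\wedge\eta_i=-\Re[H\wedge\eta]$ equals $-\Re[(a\Phi\oa)\wedge(-a\beta\oa)]=\Re[a(\Phi\wedge\beta)\oa]$ pointwise; but $\Re$ of a conjugate, $\Re(a\,\xi\,\oa)=\Re\,\xi$ for any quaternion-valued form $\xi$ (conjugation preserves the real part, and the wedge factors are scalars that commute past $a,\oa$ after one reorders — this is the same cancellation used in Lemmas~\ref{dbeta} and \ref{dH}). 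Hence $\sum_i H_i\wedge\eta_i=\Re[\Phi\wedge\beta]=-\Theta$. Multiplying by $2\la$ and adding $\la^3\eta_{123}=-\la^3\Upsilon$ gives $\varphi'=-2\la\,\Theta-\la^3\Upsilon=-\varphi$, which is exactly the assertion.

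The main obstacle is the careful bookkeeping of signs and of the order of the wedge factors when passing conjugation by $a$ through a product of $\Im\H$-valued 1-forms: one must check that $\Re[(a\Phi\oa)\wedge(a\beta\oa)]=\Re[\Phi\wedge\beta]$, which is true because each component of $\Phi\wedge\beta$ is a scalar 3-form times a product of two imaginary quaternions, conjugation by $a$ is an algebra automorphism of $\H$ (so it commutes with the quaternionic cross-product part of $\wedge$), and it fixes $\Re$. I would state this as a short pointwise lemma on $\Im\H$-valued forms and then apply it twice (to the cubic and the quadratic terms). Once that algebraic fact is in hand, the rest is immediate; in particular there is nothing to check about exterior derivatives, since $\varphi'=-\varphi$ is an identity of forms, and the \2 statement for $\varphi'$ then follows from Proposition~\ref{2.1}. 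A sanity check on the metric side is that the orthonormal coframe $\la\beta_i$ in \eqref{ON} gets replaced by $\la\eta_i$, which spans the same fibre metric because conjugation by the unit quaternion $a$ is an isometry of $\Im\H$ — so $\tg_7$ in \eqref{gt7} is unchanged, exactly as it must be for the two descriptions to agree.
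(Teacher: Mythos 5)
Your argument is correct and is essentially the paper's own proof in expanded form: the paper simply writes $\vp'=-a\vp\oa=-\vp$, using exactly the telescoping of $\oa a=1$ through the wedge products and the fact that conjugation by the unit quaternion $a$ fixes the real form $\vp$, which is what you verify term by term for $\Th$ and $\Up$. Your extra remarks (the $SO(3)$ matrix $P$ and the sign count from the three factors of $\eta$) are consistent but not needed beyond the realness observation.
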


\begin{proof} The definitions of $\eta$ and $H,$ and some telescoping,
imply that
\[ \vp' = -a\vp\oa = -\vp\]
since both $\vp$ and $\vp'$ are real differential forms.\end{proof}

The definition of $\varphi'$ in Proposition \ref{2.2} is modelled on
that of $\vp,$ but is perhaps more familiar. However, there is a
significant difference between Lemmas 1.1 and 2.2, either of which can
be used to compute $d\vp'=-d\vp$. The necessity to change the sign of
$\la$ arises from a minus sign that we inserted in the definition
\eqref{eta} of $\eta$ to preserve the combination $d\eta+\eta^2$ to
match $d\beta+\beta^2$.

As explained in \cite[p\,51]{GS}, in the 3-Sasakian context, one can
define a hyperk\"ahler triple of exact 2-forms \bel{hk} \varpi_i =
d(2R\eta_i) = 4\eta_{0i}-4R\eta_{jk}+2RH_i\ee on the cone over
$\tP^-$. Pointwise, each $2H_i=\w_{0i}-\w_{jk}$ is a horizontal ASD
2-form that matches the other terms, see \eqref{ON} with $\la=2$. The
existence of such a hyperk\"ahler structure is well known in the
setting of twistor space and quaternionic geometry \cite{Sw}, though
we are here more interested in induced $G_2$ structures on the
3-sphere bundle $\tP^-$.

The metric associated to \eqref{hk} is the cone over $X,$ and this is
one way of defining a 3-Sasakian structure \cite{Bae,BG}. Since the
holonomy of the cone lies in $Sp(2),$ it also lies in $\spin(7)$ and
$\tP^-$ must have an \2 structure. However, such a subgroup $\spin(7)$
is conjugate to the stabilizer of a 4-form
$-\varpi_1{}\!^2+\varpi_2{}\!^2+\varpi_3{}\!^2,$ so one needs to break
the $SO(3)$ symmetry of the hyperk\"ahler triple, whereupon there
emerges a family of \2 structures.

We can replace each $\beta_i$ by $\eta_i$ in \eqref{gt7} without
affecting the sum. More general families of metrics of the form
  \[\ts dt^2+\sum\limits_{k=1}^3A_i(t)^2\eta_i\ot\eta_i+B(t)^2g_4,\]
 with holonomy $\spin(7)$ were constructed by Bazaikin by deforming
 3-Sasakian structures \cite{Baz}. This has guided our own presentation
 of (or rather, hope to generalize) the homogeneous \2 structures, in
 which trigonometric functions of $t$ appear on subspaces
 $\left<\beta_k,\w_k\right>$ for $k=1,2,3$, see Corollaries \ref{FF}
 and \ref{GG}.

\section{The cohomogeneity-one action of SO(4) on S$^7$}

The group $Sp(n)$ acts transitively on the sphere $S^{4n-1}$ in
quaternionic space $\H^n$ and, in particular, \bel{S7}
S^7\con\frac{Sp(2)}{Sp(1)}\con\frac{SO(5)}{SU(2)}.\ee This
is the homogeneous model of 3-Sasakian geometry in dimension 7, with
its triple of invariant 1-forms.
From the point of view of Section 2, it is more appropriate to start
from the description \bel{SS} S^7\con\frac{Sp(2)\times
  Sp(1)}{Sp(1)\times Sp(1)} \= \frac{Sp(2)Sp(1)}{Sp(1)Sp(1)},\ee which
features in the classification \cite{FKMS} of homogeneous \2
spaces. The second factor of the isotropy group $Sp(1)Sp(1)\cong
SO(4)$ acts diagonally, and juxtaposition indicates a quotient by
$\pm1$ across both factors.  In quaternion-K\"ahler jargon, $S^7$ is
here regarded as the unit sphere in the $Sp(2)Sp(1)$ module $\fm$,
with complexification $\fm_c=E\ot H,$ where $Sp(2)$ acts on
$E\cong\C^4$ and $Sp(1)$ acts on $H\cong\C^2$.

Let $\Si^+\op\Si^-$ denote the spin representation of $\spin(4)\cong
Sp(1)\times Sp(1),$ so that $Sp(1)^+$ acts on $\Si^+\cong \C^2$ and
(as before) $Sp(1)^-$ acts on $\Si^-\cong H$. The isotopy subgroup in
\eqref{SS} then decomposes $E=\Si^+\op\Si^-,$ so that
\[ \fm_c\con(\Si^+\op\Si^-)\ot\Si^-.\]
Since $S^2\Si^-$ is the complexification of the space $\La^-$ of ASD
2-forms on $\R^4,$ the isotropy subgroup $SO(4)$ acts on the real
tangent space to $S^7$ as $\R^4\op\La^-$.  It is well known that this
action factors through $G_2$. Moreover there are independent $SO(4)$
invariant 3-forms $\vp_1,\vp_2$ on $S^7,$ a linear combination $\vp$
of which must satisfy \eqref{NP} with $\npc=6/\5,$ in accordance with
Proposition \ref{2.1}.

To obtain an action by $SO(4)\cong Sp(1)^+Sp(1)^-$ of cohomogeneity
one on $S^7,$ we move from isotropy subgroups to isometry groups and
set $E=S^3\Si^+,$ so that
\[  \fm_c \con S^3\Si^+\ot\Si^-\]
is the isotropy represention of the Wolf space $G_2/SO(4),$ as in
\cite{Z}. Then

\bel{9}\ba{rcl} S^2\fm_c
&\cong& (S^2E\ot S^2H)\op(\La^2E\ot\La^2H)\y &\cong& (S^6\Si^+\ot
S^2\Si^-)\op(S^2\Si^+\ot S^2\Si^-)\op S^4\Si^+\op\C.\ea\ee
The penultimate summand is the complexification of the space
$S^2_0(\R^3)\cong\R^5$ of $3\times3$ tracefree symmetric matrices.
Projection to this module of $v\ot v$ (where $v$ is a non-zero vector
in $\fm=\R^8$) defines the $Sp(2)Sp(1)$-equivariant fibration $S^7\to
S^4$.\vs

\begin{rem}\rm
The 9-dimensional summand of \eqref{9} is isomorphic to $S^2_0(\R^4)$
(cf.\ Remark \ref{S20}), and is the model for a geometrical stuctures
on 9-manifolds investigated in \cite{FN}. In the present context, it
gives rise to an embedding of $\RP^7$ as a hypersurface of $S^8$.
\end{rem}

Let $A\in S^4\subset S^2_0(\R^3)$ be a $3\times3$ matrix with distinct
eigenvalues. Then the orbit of $SO(3)=SU(2)^+/\Z_2$ containing $A$ has
dimension 3 and principal isotropy group $\Z_2^2$. There are
two singular orbits, each copies of $\RP^2,$ corresponding to
repeated eigenvalues, which can be negative or positive.
To study the round metric $g_4$ on $S^4$ relative to the action by
$SO(3)$ of cohomogeneity one, set $\vt_k=2\pi(k-1)/3,$ and
\bel{pcs}\ba{l}
c_k=c_k(t)=\cos(t+\vt_k)\y
s_k=s_k(t)=\sin(t+\vt_k)\ea\ee
for $k=1,2,3$. These trigonometric quantities satisfy
\bel{cs} c_1+c_2+c_3\=0\=s_1+s_2+s_3\ee
as well as the identity
\bel{ccss} c_jc_k+s_js_k=-\half,\ee
for $(ijk)=(123)$.

Now choose left-invariant 1-forms on $SO(3)$ satisfying \bel{eee}
de_1=-2e_{23},\quad de_2=-2e_{31},\quad de_3=-2e_{12}.\ee This
convention is chosen to match expressions such as
\eqref{dbeta1}. Combined with the constraint on $\ka,$ it leads us to
adopt an orthonormal basis of 1-forms on $S^4$ formed from $dt$ and
$4s_ke^k$ ($k=1,2,3$), so that \bel{g4}\ts g_4 =
dt^2+16\sul_{k=1}^3s_k(t)^2e_k\ot e_k\ee away from the singular
orbits. The associated basis of ASD 2-forms consists of
\[\ba{l}
\w_1 = 4s_1dt\we e_1-16s_2s_3 e_{23}\y
\w_2 = 4s_2dt\we e_2-16s_3s_1 e_{31}\y
\w_3 = 4s_3dt\we e_3-16s_1s_2 e_{12}.
\ea\]\vs

\begin{lem}\label{trick}
With this choice, the connection 1-form $\phi$ for $\La^2_-T^*\!S^4$ is
given by
\bel{phi} -\phi=i(2c_2+1)e_1+j(2c_2+1)e_2+k(2c_3+1)e_3,\ee
and $\ka=1$.
\end{lem}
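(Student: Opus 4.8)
I want to verify the stated formula for the connection 1‑form $\phi$ on $\La^2_-T^*S^4$ by imposing the structure equation that characterizes the Levi‑Civita connection: the torsion‑free condition $d\w_k + (\phi\we\w)_k = 0$, equivalently, the statement that the $\w_k$ of \eqref{0123} are parallel with respect to $\phi$ up to the cross‑product action of $\phi$ on $\Im\H$. Concretely, since $\W = i\W_1 + j\W_2 + k\W_3$ is the tautological ASD 2‑form and $\phi$ is the induced connection on $\Si^-$, the compatibility reads $d\W = -2\,\phi\we\W$ (the analogue of \eqref{dW} applied to $\W$ itself rather than $\Phi$), i.e. in components
\[
d\W_1 = -2(\phi_2\we\W_3 - \phi_3\we\W_2),\qquad\text{and cyclically.}
\]
So the strategy is: (i) write out $d\W_k$ from the explicit expressions for the $\w_k$ in terms of $dt$, $e_k$, $s_k$, $c_k$; (ii) write out the right‑hand side using the ansatz $-\phi = i\,f_1 e_1 + j\,f_2 e_2 + k\,f_3 e_3$ with unknown functions $f_k = f_k(t)$; (iii) match coefficients to pin down $f_k$, and check it forces $f_1 = 2c_2+1$, $f_2 = 2c_2+1$... — wait, the stated formula has $f_1 = f_2 = 2c_2+1$ and $f_3 = 2c_3+1$, which looks asymmetric, so part of the work is confirming that this asymmetry is genuine (it reflects the specific convention \eqref{eee} and the labelling of the $\w_k$) rather than a typo; I would re‑derive it carefully and flag the asymmetry.

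**Carrying it out.** First I compute the exterior derivatives. Using $ds_k = c_k\,dt$, $dc_k = -s_k\,dt$ and $de_k = -2e_{ij}$ from \eqref{eee}:
\[
d\w_1 = d(4s_1\,dt\we e_1) - d(16 s_2 s_3\, e_{23})
      = -4 s_1\, dt\we de_1 - 16(c_2 s_3 + s_2 c_3)\,dt\we e_{23} - 16 s_2 s_3\, d(e_{23}),
\]
and $d(e_{23}) = de_2\we e_3 - e_2\we de_3 = -2 e_{31}\we e_3 + 2 e_2\we e_{12} = 0$ by repeated indices, so the last term drops. Then $dt\we de_1 = -2\,dt\we e_{23}$, giving $d\w_1 = \big(8 s_1 - 16(c_2 s_3 + s_2 c_3)\big)\,dt\we e_{23} = \big(8 s_1 - 16 s_{23}'\big)\,dt\we e_{23}$ where $s_{23}' := c_2 s_3 + s_2 c_3 = \sin(2t + \vt_2 + \vt_3)$. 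Since $\vt_2 + \vt_3 = 2\pi - \vt_1 \equiv -\vt_1$, this equals $\sin(2t - \vt_1)$; using $\vt_1 = 0$ actually, so it is $\sin 2t$, but for $\w_2,\w_3$ the corresponding phase shifts appear. I would keep everything in the $c_k, s_k$ notation and use \eqref{cs}–\eqref{ccss} to simplify. Next, on the right‑hand side, with the ansatz,
\[
-2(\phi_2\we\W_3 - \phi_3\we\W_2) = -2\big(f_2 e_2\we\w_3 - f_3 e_3\we\w_2\big),
\]
and expanding $\w_3, \w_2$ in the $(dt, e_k)$ basis produces terms $f_2 e_2\we(4 s_3\, dt\we e_3) = 4 f_2 s_3\, e_2\we dt\we e_3 = -4 f_2 s_3\, dt\we e_{23}$ (reordering) plus the $e_{ij}$‑only pieces which vanish against $e_2$ or $e_3$ by repeated indices. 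Collecting, the $dt\we e_{23}$ coefficient of the RHS is $-2(-4 f_2 s_3 + 4 f_3 s_2) = 8(f_2 s_3 - f_3 s_2)$. Matching with the LHS coefficient $8 s_1 - 16(c_2 s_3 + s_2 c_3)$ yields one scalar equation $f_2 s_3 - f_3 s_2 = s_1 - 2(c_2 s_3 + s_2 c_3)$ per cyclic index; solving the resulting $3\times 3$ linear system in $f_1, f_2, f_3$ (using $s_1+s_2+s_3=0$, $c_jc_k+s_js_k=-\tfrac12$, and $s_1 = -(s_2+s_3)$ etc.) should give a unique solution, which I then identify with $(2c_2+1, 2c_2+1, 2c_3+1)$ — the computation will show which $c$‑indices actually appear and confirm (or correct) the asymmetric pattern. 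Finally, $\ka=1$ follows immediately: the scalar curvature of the round $S^4$ is $12\ka$, and with $g_4$ normalized as in \eqref{g4} one checks $\ka=1$ directly, or one simply invokes the convention fixed in Section \ref{not} that $g_4$ is scaled so $\ka=1$ and notes that \eqref{g4} is the round metric on $S^4(1)$ in these coordinates (the factor $16 = (4s_k)^2$ peak value and the $dt$ range $[0,\pi]$ give the standard round sphere); so the content is really just that \eqref{g4} is consistent with $\ka=1$, which is a one‑line check of, say, the scalar curvature or of the Einstein ASD equation \eqref{PW}.

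**Main obstacle.** The delicate point is the bookkeeping of signs and the phase conventions: reordering wedge products of 1‑forms, the sign in \eqref{eee}, the minus sign in the definition of $\phi$ (note the formula gives $-\phi$, not $\phi$), and the role of $\vt_k = 2\pi(k-1)/3$ all conspire, and it is easy to land on $2c_j + 1$ with the "wrong" $j$ or an overall sign error. The apparent asymmetry (two $c_2$'s and one $c_3$) is exactly where I'd expect a slip, so I would double‑check it by an independent route: verify that the resulting $\phi$ satisfies the curvature identity \eqref{PW}, i.e. compute $\Phi = d\phi + \phi\we\phi$ from \eqref{phi} and check it equals $\tfrac12\W$ with the $\w_k$ given above. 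That cross‑check is essentially the statement of Lemma \ref{trick} together with $\ka=1$, so passing it confirms both assertions simultaneously; I expect this verification, rather than the initial derivation, to be the step that catches any arithmetic error. I also want to confirm that $\phi$ extends smoothly over the singular orbits (where some $s_k\to 0$) — since $\phi$ is written purely in terms of the globally defined $e_k$ and smooth functions of $t$, this is automatic, but it is worth a remark that the connection so defined is the genuine Levi‑Civita connection on $\La^2_-$ over all of $S^4$, not merely on the open dense cohomogeneity‑one stratum.
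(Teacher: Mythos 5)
Your strategy --- pinning down $\phi$ from the structure equation \eqref{dW} applied to the ASD basis, and then confirming the Einstein condition \eqref{PW}, which simultaneously fixes $\ka=1$ --- is exactly the paper's proof, which verifies both equations for the stated $\phi$ and appeals to uniqueness of the solution. You are also right to suspect the asymmetric pattern in the statement: the coefficient of $e_1$ should be $2c_1+1$ rather than $2c_2+1$, as the paper's own computation of $d\phi_1+2\phi_{23}$ (which uses $2c_1+1$) confirms.
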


\begin{proof} It suffices to verify \eqref{dW} and \eqref{PW}, since
these equations have a unique solution by the theories of Cartan or
Levi-Civita.

First observe that
\[d\w_1 \= 8(s_1-2(c_2s_3+s_2c_3))dt\we e_{23},\]
whereas
\[\ba{rcl}
-2(\phi_2\we\w_3-\phi_3\we\w_2)
&=& 8(ac_2-1)s_3 dt\we e_{23}+8(ac_3-1)s_2dt\we e_{23}\y
&=& 8(s_1-2(c_2s_3+c_3s_2))dt\we e_{23}.\ea\]
This verifies \eqref{dW} after cyclic permutations.

As regards \eqref{PW}, we have
\[\ba{rcl} d\phi_1+2\phi_{23}
&=& 2s_1dt\we e_1+2(2c_1+1)e_{23}+2(2c_2+1)(2c_3+1)e_{23}\y
&=& 2s_1dt\we e_1 + (4+8c_2c_3)e_{23}\y
&=& 2s_1dt\we e_1 - 8s_2s_3e_{23}\y
&=& \half\w_1,\ea\]
which is consistent with \eqref{PW} and $\ka=1$.
\end{proof}

Next, consider the locally-defined 1-form \bel{f} f = \oa\,da =
\oa(\alp+a\phi) = \beta+\phi\ee on $\Si^-$ over a self-dual Einstein
4-manifold. It is vector valued on the 3-sphere bundle, and transforms
as $f\lto\oq fq$. It arises from the Maurer-Cartan form of the fibre,
and
\[ df = d(\oa\,da)=d\oa\we da=(d\oa\,a)\we(\oa\,da)=-f\we f\]
so in components, \bel{fff} df_1=-2f_{23},\quad df_2=-2f_{31},\quad
f_3=-2f_{12}.\ee Regard $S^7=\tP^-\!S^4$ as the $S^3$ bundle over $S^4$.
Consider the 1-forms $(e_k)$ and $(f_k)$ satisfying \eqref{fff} and
\eqref{eee}. Lemma \ref{trick} yields\vs

\begin{theorem}\label{conn} For $k=1,2,3,$ we have
$\beta_k = (2c_k+1)e_k+f_k$.
\end{theorem}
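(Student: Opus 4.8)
The plan is to read off the formula by combining the defining identity \eqref{f} with Lemma~\ref{trick}. Rewriting \eqref{f} as $\beta = f - \phi$, and recalling that on $\tP^-$, where $R = \oa a = 1$, each of $\beta$, $f$ and $\phi$ is vector valued, this reads $\beta_k = f_k - \phi_k$ for $k = 1,2,3$ in imaginary-quaternion components. Lemma~\ref{trick} asserts that the connection 1-form of $\La^2_-T^*\!S^4$ has components $\phi_k = -(2c_k+1)e_k$, so substituting gives $\beta_k = (2c_k+1)e_k + f_k$ at once.

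What needs a moment's care is the status of the 1-forms on $S^7 = \tP^-\!S^4$, seen over the complement of the singular orbits of its cohomogeneity-one $SO(4)$-action. The $e_k$ are the left-invariant 1-forms on the $SO(3) = SU(2)^+/\Z_2$ acting on $S^4$ with cohomogeneity one, normalized by \eqref{eee}; the $f_k$, normalized by \eqref{fff}, are the components of the fibre Maurer--Cartan form $\oa\,da$ and span the 1-forms attached to the $SU(2)^-$ factor. Pulled back to $S^7$, the forms $dt$, $e_1,e_2,e_3$, $f_1,f_2,f_3$ constitute a coframe there, and the point is that the $\phi$ appearing in \eqref{f} is the $\pi$-pullback of the connection 1-form on $S^4$ written out in Lemma~\ref{trick} relative to the same $e_k$. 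This is forced because that connection form is $\pi$-related to a 1-form on $S^4$, because Lemma~\ref{trick} pins the latter down uniquely via \eqref{dW} and \eqref{PW}, and because pullback commutes with $d$.

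Since the assertion is in this sense just a substitution, there is no genuine obstacle; the real content is the bookkeeping of signs and of the $\ka = 1$ scaling of $g_4$, and the natural way to audit it — which could equally serve as the proof, by appealing to the uniqueness invoked in Lemma~\ref{trick} — is to differentiate. Applying $d$ to $\beta_k = (2c_k+1)e_k + f_k$ with the help of \eqref{eee}, \eqref{fff}, $dc_k = -s_k\,dt$ and the trigonometric identities \eqref{cs}--\eqref{ccss}, together with $\phi_k = -(2c_k+1)e_k$ and $\Phi_1 = \half\W_1$, one should recover \eqref{dbeta1} (equivalently Lemma~\ref{dbeta}): the $e_k\we f_l$ cross terms produced by expanding $\beta_{23}$ reproduce those coming from $\phi_2\we\beta_3 - \phi_3\we\beta_2$, while the purely $e$-valued part of the $e_{23}$ coefficient closes up precisely through \eqref{cs} and \eqref{ccss}. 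This is exactly the mechanism, noted after \eqref{dbeta1}, by which the verticality of $\beta_{23}$ forces the mixed terms on the right-hand side.
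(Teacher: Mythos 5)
Your proposal is correct and is essentially the paper's own argument: the authors state the theorem as an immediate consequence of \eqref{f} (i.e.\ $\beta=f-\phi$ on $R=1$) combined with Lemma~\ref{trick}, which is exactly your substitution $\phi_k=-(2c_k+1)e_k$. The additional verification by differentiating against \eqref{dbeta1} is a sound consistency check but not needed, since the uniqueness of the connection was already invoked in the proof of Lemma~\ref{trick}.
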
 

\noindent These equations relate the 1-forms from Lemma 1.1 to the
action of $SO(4),$ which is a significant step in understanding the
cohomogeneity-one setup. The trigonometric terms arise from the action
of the Weyl group in the cohomogeneity-one theory \cite{GWZ}, and is
suggestive of a discrete Fourier transform. One cannot usefully
substitute $(\beta_k)$ by the 3-Sasakian 1-forms $(\eta_k)$ in the
theorem, since this would yield $a\phi\oa$ in \eqref{f}.\vs

An orthonormal basis of 1-forms for the round metric $g_7$ on $S^7$ is
given by
\[dt,\ \ 2\beta_1,\ \ 2\beta_2,\ \ 2\beta_3,\ \
4s_1e_1,\ \ 4s_2e_2,\ \ 4s_3e_3.\]
Set $\tau=t/2$. Then

\[\ba{rcl} g_7
&=& dt^2 + 4\sul_{k=1}^3(\beta_k)^2 + \pi^*g_4\yyy
&=& dt^2 + 4\sul_{k=1}^3(\beta_k)^2 + 16\sul_{k=1}^3(s_ke_k)^2\yyy
&=& dt^2 + 4\sul_{k=1}^3\big[f_k^2+\ga_k(e_kf_k+f_ke_k)+\de_ke_k^2\big],\ea\]
\vskip-10pt where
\[\ga_k\= 2c_k+1 = -1+4\cos^2\tau,\quad
\de_k\= (2c_k+1)^2+(2s_k)^2 = 1+8\cos^2\tau.\]
It follows that the round metric can be
expressed as $dt^2$ plus a $6\times6$ matrix with $2\times2$ symmetric
blocks, of which the first is
\bel{block} 4\left(\ba{ccc} 1 && -1\+4\cos^2\tau\yy
-1\+4\cos^2\tau && 1\+8\cos^2\tau\ea\right).\ee
This can be reconciled with \cite[p\,7]{Z} up to scale, in which the
period of $t$ in \eqref{pcs} has been halved to $\pi/6$. A change of
off-diagonal sign results from our ASD convention.\vs

We now turn to the squashed metric \eqref{gt7} on $S^7$ and its \2
structure. Consider the orthonomal basis 1-forms $(\F_i)$ defined by
$\F_{2k-1}=\la\beta_k,$ $\F_{2k}=4s_ke_k,$ and $\F_7=-dt$. Theorem
\ref{conn} implies

\begin{cor}\label{FF} Let $\ka=1$ and $\la=2/\5$. Then
\[\left(\ns\ba{c} \F_{2k-1}\y \F_{2k}\ea\ns\right) \=
 \left(\ba{cc} \la & \la(2c_k+1)\yy 0 & 4s_k\ea\right)
 \left(\ns\ba{c} f_k\y e_k\ea\ns\right)\] for $k=1,2,3$.
\end{cor}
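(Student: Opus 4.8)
The plan is to recognize that Corollary \ref{FF} is nothing more than Theorem \ref{conn} together with the definitions $\F_{2k-1}=\la\beta_k$, $\F_{2k}=4s_ke_k$, repackaged as a matrix factorization of the coframe; so there is little to do beyond expanding one product.

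First I would multiply out the right-hand side of the asserted identity: for each $k\in\{1,2,3\}$,
\[
\begin{pmatrix}\la & \la(2c_k+1)\\[2pt] 0 & 4s_k\end{pmatrix}\begin{pmatrix}f_k\\[2pt] e_k\end{pmatrix}
=\begin{pmatrix}\la\bigl(f_k+(2c_k+1)e_k\bigr)\\[2pt] 4s_ke_k\end{pmatrix}.
\]
The lower entry is $\F_{2k}$ by definition. For the upper entry, Theorem \ref{conn} gives $f_k+(2c_k+1)e_k=\beta_k$, so it equals $\la\beta_k=\F_{2k-1}$; and $\F_7=-dt$ does not enter the $k=1,2,3$ blocks. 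That is the entire identity. The only thing worth recording is the hypotheses under which Theorem \ref{conn} was proved and which are in force throughout this section: the normalization $\ka=1$ (which is precisely what puts the connection form $\phi$ into the shape \eqref{phi} of Lemma \ref{trick}), the structure equations \eqref{eee} for $(e_k)$ and \eqref{fff} for $(f_k)$, and the value $\la=2/\5$ coming from Proposition \ref{2.1}.

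For completeness I would also confirm the orthonormality claimed just before the statement: substituting \eqref{g4} into \eqref{gt7} and using $\la^2=4/5$ gives
\[
\tg_7=\sul_{k=1}^3(\la\beta_k)^2+\sul_{k=1}^3(4s_ke_k)^2+dt^2=\sul_{i=1}^7\F_i\ot\F_i
\]
away from the singular orbits (one may equally replace each $\beta_i$ by $\eta_i$ here, as noted earlier, without altering the sum), so $(\F_i)$ is indeed an orthonormal coframe. I do not expect any genuine obstacle in this corollary; the single point worth flagging is that the $2\times2$ block is \emph{upper} triangular precisely because, in Theorem \ref{conn}, $f_k$ carries no $e_k$-component — the Maurer–Cartan form of the fibre contributes cleanly to $\beta_k$ while the trigonometric ``Weyl-group'' factor $2c_k+1$ sits only on the $e_k$ term.
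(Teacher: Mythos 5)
Your proposal is correct and coincides with the paper's own (one-line) justification: the corollary is stated as an immediate consequence of Theorem \ref{conn} together with the definitions $\F_{2k-1}=\la\beta_k$ and $\F_{2k}=4s_ke_k$, which is exactly the expansion you carry out. The extra verification that $(\F_i)$ is orthonormal for $\tg_7$ via \eqref{g4} and $\la^2=4/5$ is consistent with the text but not needed for the identity itself.
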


\noindent The behaviour of both $g_7$ and $\tg_7$ at the two singular
orbits of $SO(4)$ can be investigated by setting $t=0$ and $t=\pi/3,$
so that $\F_2=0$ and $\F_4=0,$ respectively.\vs

Referring to \eqref{varphi},
we now see that $\la^3\Up=\F_{135}$ and
\[2\la\Th \= (\F_{72}-\F_{46})\we \F_1+(\F_{74}-\F_{62})\we \F_3
+(\F_{76}-\F_{24})\we \F_5.\]
Proposition \ref{2.1} therefore yields\vs

\begin{theorem} The natural \2 structure on $S^7$ has
\[ \vp \= dt\we (\F_{12}+\F_{34}+\F_{56}) +
\F_{135}-\F_{146}-\F_{236}-\F_{245}.\]
\end{theorem}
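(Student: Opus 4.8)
The plan is to substitute the change of basis from Corollary \ref{FF} directly into the formula $\vp = 2\la\,\Th + \la^3\Up$ of Proposition \ref{2.1}, and to rewrite everything in terms of the orthonormal coframe $(\F_i)$. First I would recall that $\Up = \beta_{123}$, so $\la^3\Up = (\la\beta_1)\we(\la\beta_2)\we(\la\beta_3) = \F_1\we\F_3\we\F_5 = \F_{135}$; this is immediate and needs no calculation. The substantive computation is $2\la\,\Th$, where $\Th = \sum_i\Phi_i\we\beta_i$. Here I would use that, by \eqref{PW} with $\ka=1$, $\Phi_i = \half\W_i$, and that by \eqref{0123} each $\W_i = \w_{0i}-\w_{jk}$ is a horizontal ASD 2-form built from the orthonormal coframe $\w_0,\dots,\w_3$ of $g_4$. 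Under the identifications of Corollary \ref{FF} (and the text preceding this theorem), $\w_0 = -dt = \F_7$ and $\w_k = 4s_ke_k = \F_{2k}$ for $k=1,2,3$, so $\W_k = \F_{7,2k} - \F_{2i,2j}$ for $(ijk)=(123)$, i.e.\ $2\Phi_1 = \F_{72}-\F_{46}$, $2\Phi_2 = \F_{74}-\F_{62}$, $2\Phi_3 = \F_{76}-\F_{24}$, exactly as displayed in the paragraph just before the theorem. Since $\beta_k = \F_{2k-1}/\la$, we get $2\la\,\Th = \sum_k 2\Phi_k\we\F_{2k-1}$, which is precisely the displayed expression $(\F_{72}-\F_{46})\we\F_1 + (\F_{74}-\F_{62})\we\F_3 + (\F_{76}-\F_{24})\we\F_5$.

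The remaining work is then purely bookkeeping: expand that sum of six 3-forms, reorder the indices into increasing order tracking signs, and collect. Explicitly, $\F_{72}\we\F_1 = \F_{721} = \F_{127}$ (two transpositions), $-\F_{46}\we\F_1 = -\F_{461} = -\F_{146}$, $\F_{74}\we\F_3 = \F_{743} = \F_{347}$, $-\F_{62}\we\F_3 = -\F_{623} = -\F_{236}$, $\F_{76}\we\F_5 = \F_{765} = \F_{567}$, and $-\F_{24}\we\F_5 = -\F_{245}$. Adding $\F_{135}$ from $\la^3\Up$, the terms involving $\F_7$ are $\F_{127}+\F_{347}+\F_{567} = dt\we(-\F_{12}-\F_{34}-\F_{56})$ — here one must be careful that $\F_7 = -dt$, so $\F_{127} = \F_{12}\we\F_7 = -\F_{12}\we dt = dt\we\F_{12}$ after another transposition; I would track this sign explicitly so that the final answer reads $dt\we(\F_{12}+\F_{34}+\F_{56})$ as stated. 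The terms without $\F_7$ assemble into $\F_{135}-\F_{146}-\F_{236}-\F_{245}$.

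The only genuine subtlety — and the step most likely to trip one up — is the consistent handling of orientation conventions: the sign of $\F_7 = -dt$ (not $+dt$), the two-transposition reorderings of three-index wedge products, and the minus signs coming from the ASD identity $\W_k = \w_{0k}-\w_{jk}$. Each individual reordering is trivial, but there are enough of them that a single sign slip would corrupt the final 3-form, so I would lay out a small table of the six 3-forms in $2\la\,\Th$ with their normalized $\F_{abc}$ forms ($a<b<c$) and signs before summing. A secondary check worth including: verify that the resulting $\vp$ is indeed a $G_2$ 3-form for the coframe $(\F_i)$ with the expected orientation, e.g.\ by confirming it matches the standard model $\vp = e^{127}+e^{347}+e^{567}+e^{135}-e^{146}-e^{236}-e^{245}$ (up to our conventions) — this is automatic from Proposition \ref{2.1} but serves as a useful sanity check on the signs. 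No deep idea is needed beyond Proposition \ref{2.1} and Corollary \ref{FF}; the theorem is a transcription of the abstract formula into the explicit cohomogeneity-one coframe.
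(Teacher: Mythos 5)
Your proposal is correct and follows exactly the paper's own route: the paper's ``proof'' is precisely the paragraph preceding the theorem, which records $\la^3\Up=\F_{135}$, rewrites $2\la\Th$ in the coframe of Corollary \ref{FF}, and then invokes Proposition \ref{2.1}, so your expansion is just the explicit form of that. One caveat on your bookkeeping: two of your intermediate identities are individually false but happen to cancel --- in fact $\F_{721}=-\F_{127}$ (reversing three indices is an odd permutation), and $-\F_{12}\we dt=-dt\we\F_{12}$ (a 2-form commutes with a 1-form under wedge) --- so although your end results $\F_{72}\we\F_1=dt\we\F_{12}$, $-\F_{46}\we\F_1=-\F_{146}$, etc.\ are right, the sign table you propose should be computed directly from $\F_7=-dt$ rather than via those two steps as written.
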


The theorem asserts that in the stated basis, $\vp$ assumes a standard
form
\[\ba{rcl}
\vp &=& dt\we \xi + \Re\Xi\y
*\vp &=& -dt\we \Im\Xi +\frac12\xi^2\ea\]
relative to the $SO(4)$ orbits, where
\bel{xiX}\ba{rcl}
 \xi &=& \F_{12}+\F_{34}+\F_{56}\y
 \Xi &=& (\F_1+i\F_2)\we(\F_3+i\F_4)\we(\F_5+i\F_6).\ea\ee

To sum up: we have described the `squashed' 3-form $\vp$ from
Propositions \ref{2.1} and \ref{2.2} in terms of $SO(4)$ invariant
1-forms, via Corollary \ref{FF}. This provides a model for the
construction of \2 structures in the cohomogeneity-one situation of an
interval times (a finite quotient of) $SO(4)$. In the next section, we
shall discover a strikingly similar description on the principal part
of the isotropy irreducible space $SO(5)/SO(3)$.

\section{The cohomogeneity-one action of SO(4) on B$^7$}

Berger discovered two homogeneous spaces of positive sectional
curvature, of respective dimensions 7 and 13, namely
\[ B^7=\frac{SO(5)}{SO(3)},\qquad
B^{13}=\frac{SU(5)}{Sp(2)U(1)},\] which he studied in
\cite{B}. The former is the subject of this section; it is the coset
space that parametrizes principal subalgebras $\so(3)$ of $\so(5)$,
equivalently Veronese surfaces in $S^4$ \cite{BM1}.

The subgroup $SO(3)$ of $SO(5)$ acts irreducibly on $\R^5,$ which can be
identified with the space $S^2_0(\R^3),$ equivalently the space of
traceless symmetric $3\times3$ matrices. The isotropy subgroup of
$B^7$ is a principal 3-dimensional subgroup that acts irreducibly on
$\R^7$. Indeed,
\bel{53}\so(5)=\so(3)\op\fm,\ee
where $\fm$ can be identified with the space $S^3_0(\R^3)$ of primitive
cubic forms on $\R^3$. After complexification,
\bel{so5}
\so(3)_c\con S^2(\C^2),\quad
(\R^5)_c\con S^4(\C^2),\quad
\fm_c   \con S^6(\C^2)\ee
where $\C^2$ denotes the faithful representation of the double-cover
$SU(2)$ of $SO(3)$.

It is known that the action of $SO(3)$ on $\fm$ factors through
$G_2$. Indeed, there is an $SU(2)$ equivariant isomorphism \bel{864}
\La^3(S^6(\C^2))\con S^{12}\op S^8\op S^6\op S^4\op S^0\ee (with
obvious abbreviation), and the trivial 1-dimensional submodule
$S^0\cong\C$ is spanned by a $G_2$-admissible 3-form $\vp$. The latter
also arises from the projection to $\fm$ of the Lie bracket
$[\fm,\fm]$ within \eqref{53}. The fact that $[\fm,\fm]$ does not lie
in $\so(3)$ implies that $B^7$ is not a symmetric space, and its
holonomy group equals $SO(7)$.

Now, $\vp$ determines an $SO(5)$-invariant 3-form on $B^7$ (which we
denote with the same symbol), and an invariant 4-form again unique up
to scaling (which we can denote $*\vp$). Since there is no non-zero
invariant 2-form, $d\vp$ must be a nonzero multiple of $*\vp$, which
is the \2 condition \eqref{NP}. Hence, the well-known\vs

\begin{prop}
  $B^7$ has an $SO(5)$-invariant \2 structure.
\end{prop}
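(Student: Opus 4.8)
The plan is to make the representation-theoretic sketch in the paragraph preceding the Proposition into an actual proof by producing the invariant 3-form explicitly and then counting invariants. First I would set up $\fm$ as the isotropy module, i.e. $\so(5)=\so(3)\oplus\fm$ with $\fm\cong S^3_0(\R^3)$ as an $SO(3)$-module, and recall that an $SO(5)$-invariant $k$-form on $B^7=SO(5)/SO(3)$ is precisely an element of $(\Lambda^k\fm^*)^{SO(3)}$, so everything reduces to invariant theory of the principal $\so(3)\subset\so(7)$ acting on $\fm\cong\R^7$.

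The key computation is the decomposition \eqref{864} of $\Lambda^3\fm_c\cong\Lambda^3 S^6(\C^2)$. I would obtain this by a standard character/Clebsch--Gordan calculation: $\Lambda^3$ of a 7-dimensional $SU(2)$-module $V_6$ (here $V_n$ denotes the irreducible of highest weight $n$) has dimension $35$, and grinding the plethysm gives $\Lambda^3 V_6\cong V_{12}\oplus V_8\oplus V_6\oplus V_4\oplus V_0$. The crucial point is the multiplicity-one occurrence of the trivial summand $V_0\cong\C$, which (being real) yields a genuine real invariant 3-form $\varphi\in(\Lambda^3\fm^*)^{SO(3)}$, unique up to scale. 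That this $\varphi$ is nondegenerate with stabilizer $G_2$ follows because the $SO(3)$-action on $\fm\cong\R^7$ is known to factor through $G_2\subset SO(7)$ (the branching $\mathfrak{g}_2\downarrow\so(3)$ for the principal embedding has $\R^7=V_6$), so $\varphi$ must be, up to scale, the pullback of the standard $G_2$ 3-form — hence $G_2$-admissible. Alternatively one can identify $\varphi$ directly with the image of the Lie bracket $[\fm,\fm]\to\fm\subset\so(5)$ composed with the Killing form, as the paragraph indicates, and check this trilinear form is nonzero precisely because $B^7$ is not symmetric.

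Next I would run the same character computation on $\Lambda^2\fm_c\cong\Lambda^2 V_6$ (dimension $21$), getting $\Lambda^2 V_6\cong V_{10}\oplus V_6\oplus V_2$, which contains \emph{no} trivial summand; hence there is no nonzero $SO(5)$-invariant 2-form on $B^7$, and similarly $\Lambda^4\fm_c\cong\Lambda^3\fm_c$ (by Hodge duality on the 7-dimensional $\fm$, or by $\Lambda^4 V\cong\Lambda^3 V\otimes\Lambda^7 V$ with $\Lambda^7 V$ trivial) contains the trivial module exactly once, giving a unique invariant 4-form, which we call $*\varphi$ (and which is indeed the Hodge dual of $\varphi$ for the associated metric). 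Finally, $d\varphi$ is automatically $SO(5)$-invariant and lies in $\Omega^4$, so by uniqueness $d\varphi=\mu\,{*\varphi}$ for some constant $\mu$; and $\mu\neq0$ because if $d\varphi=0$ then $\varphi$ would be parallel and $B^7$ would have reduced holonomy, contradicting the stated fact that the holonomy of $B^7$ is all of $SO(7)$ (equivalently, one checks $d\varphi\neq0$ directly from the structure constants). This is exactly the \2 condition \eqref{NP}, so $B^7$ carries an $SO(5)$-invariant nearly-parallel $G_2$ structure.

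The main obstacle is the plethysm computation $\Lambda^3 V_6\cong V_{12}\oplus V_8\oplus V_6\oplus V_4\oplus V_0$ — not conceptually hard, but the bookkeeping (ordered triples of weights, signs, multiplicities) must be done carefully, and it is where the whole argument lives: we need the trivial summand to appear with multiplicity exactly one (for existence and uniqueness of $\varphi$) and we need the absence of a trivial summand in $\Lambda^2 V_6$ (so that $d\varphi$ is forced to be a multiple of $*\varphi$ rather than allowing an extra invariant 2-form to spoil the rigidity). A secondary subtlety is verifying the $G_2$-admissibility (nondegeneracy with the correct stabilizer) of the invariant 3-form rather than merely its existence; the cleanest route is to invoke that the principal $SO(3)$ sits inside $G_2$, so that the unique $SO(3)$-invariant 3-form is automatically the restriction of a $G_2$-invariant one.
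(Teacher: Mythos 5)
Your proposal is correct and follows essentially the same route as the paper, whose argument is exactly the representation-theoretic one you spell out: the decomposition $\La^3(S^6(\C^2))\cong S^{12}\op S^8\op S^6\op S^4\op S^0$ of \eqref{864} giving a unique invariant ($G_2$-admissible) 3-form, the absence of invariant 2-forms and the uniqueness of the invariant 4-form forcing $d\vp=\npc*\vp$, and holonomy $SO(7)$ ruling out $\npc=0$. The only point worth tightening is that ``$d\vp=0$ implies $\vp$ parallel'' also uses $d*\vp=0$, which follows from the absence of invariant 5-forms (dual to your $\La^2V_6$ computation), so your own ingredients already supply it.
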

 
\noindent It follows that the Riemannian cone $B^7\times\R^+$ has
holonomy $\spin(7)$. It is not known whether this singularity can be
resolved in the manner of \cite{Baz}, but it motivates the study of
metrics with holonomy $\spin(7)$ admitting a codimension-two action by
$SO(4)$.

\begin{rem}\rm
The space $B^{13}$ is not isotropy irreducible. There is a fibration
$B^{13}\to\CP^4,$ with fibre $SU(4)/Sp(2)\cong S^5$. Its complexified
isotropy representation is $(V\op\ol V)\op\La^2_0E,$ where $V = E\ot
L\cong\C^4$ is a standard representation of $Sp(2)\times U(1)$. The
space $B^{13}$ has a 3-dimensional space of invariant 4-forms, which
are relevant to the study of `strongly positive curvature' \cite{ADS}.
\end{rem} 

The 3-form $\vp$ on $B^7$ was computed explicitly in \cite{GKS} and in
\cite{BM1}, but in different ways. The calculations below are
equivalent, but our approach emphasizes the underlying 3-fold
symmetry.

Denote by $S_{ij}$ the symmetric $3\times 3$ matrix with $(i,j)$ and
$(j,i)$ entry equal to $1$ and $0$ elsewhere. We then adopt the
following basis of traceless symmetric matrices: \bel{oS}\ba{c}
\oS_1=\frac1{\sqrt6}\diag(1,1,-2),\qquad
\oS_2=\frac1{\sqrt2}\diag(1,-1,0),\yy \oS_3=S_{12},\quad
\oS_4=S_{13},\quad \oS_5=S_{23}.\ea\ee We represent elements of the
Lie algebra \eqref{so5} by skew-symmetric $5\times5$ matrices.  Let
$E_{ij}$ denote the matrix with $1$ at $(i,j),$ $-1$ at $(j,i),$ and
$0$ elsewhere. For the subalgebra $\so(3),$ we may choose a basis
$\{\vep_1,\vep_2,\vep_3\},$ where
\[\ba{rcl}
\vep_1 &=& \6(2E_{23}+E_{45})\\
\vep_2 &=& \6(E_{24}+E_{35}+\sqrt3E_{14})\\
\vep_3 &=& \6(-E_{25}+E_{34}+\sqrt3E_{15}).\ea\]
One can check that $\tr(\vep_i\vep_j)=-2\delta_{ij},$ and that
$[\vep_i\vep_j]=-\la\vep_k,$ where $(ijk)=(123)$.

Now set
\[ R(t)=\left(\!\ba{ccccc}
\cos t&\sin t&0&0&0\\
-\sin t&\cos t&0&0&0\\
0&0&0&1&0\\
0&0&0&0&1\\
0&0&1&0&0\ea\!\right).\]
We have chosen the lower block to be a $3\times3$ permutation matrix
(rather than the identity) because $\rho=R(\frac{2\pi}3)$ has special
significance; it is derived from the effect
on \eqref{oS} of permuting the basis of $\R^3$. It therefore lies in
the intersection of the principal $SO(3)$ with the subgroup
$SO(2)\times SO(3)$ of $SO(5)$. Conjugation by this element of order 3
cycles the $\vep_i,$ so that $\rho^{-1}\vep_1\rho=\vep_2$, etc.

Let $a=\cos(2\pi/3)$ and $b=\sin(2\pi/3)$.
A basis for $\fm$ is given by $\{\ga_1,\ldots,\ga_7\}$ where
\bel{ga1-7}\ba{rcl}
\ga_1 &=& \6(E_{23}-2E_{45})\\
\ga_2 &=& -E_{13}\\
\ga_3 &=& \6(bE_{15}+aE_{25}-2E_{34})\\
\ga_4 &=& -aE_{15} +bE_{25}\\
\ga_5 &=& \6(-bE_{14}+aE_{24}+2E_{35})\\
\ga_6 &=& -aE_{14}-bE_{24}\\
\ga_7 &=& E_{12}.\ea\ee
One can check that $\tr(\ga_i\ga_j)=-2\delta_{ij},$ and that
$\tr(\ga_i\vep_j)=0$ for all $i,j$. Observe that 
$\rho$ cycles the pairs
$(\ga_{2k-1},\ga_{2k})$ for $k=1,2,3,$ in the sense that
$\rho^{-1}(\ga_1,\ga_2)\rho=(\ga_3,\ga_4)$ etc. In fact, it was only
necessary to choose the orthogonal basis $\ga_1,\ga_2$ orthogonal to
$\so(3)$; the rest were generated by $\rho$.
On the other hand, $\ga_7$ commutes with $\rho$.

Having made these choices, the $SO(5)$-invariant 3-form $\vp$ must
be proportional to
\[{\ts\frac1{10}}
\sum_{i<j<k}\tr([\ga_i,\ga_j]\ga_k)\>\ga_i^*\we\ga_j^*\we\ga_k^*,\]
where $(\ga_i^*)$ is the dual basis of invariant 1-forms.
With the given normalization, we discover\vs

\begin{prop}\label{127}
$\vp \=
\ga^*_{127}+\ga^*_{135}-\ga^*_{146}-\ga^*_{236}
-\ga^*_{245}+\ga^*_{347}+\ga^*_{567}$.
\end{prop}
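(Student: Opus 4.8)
The plan is to compute the structure constants $c_{ijk}=\tr([\ga_i,\ga_j]\ga_k)$ directly from the explicit matrices \eqref{ga1-7}, and then read off $\vp$ from the formula $\vp=\frac1{10}\sum_{i<j<k}c_{ijk}\,\ga^*_{ijk}$. The key simplification is the 3-fold symmetry already set up in the excerpt: since $\rho=R(\tfrac{2\pi}3)$ cycles the pairs $(\ga_1,\ga_2)\to(\ga_3,\ga_4)\to(\ga_5,\ga_6)$ and fixes $\ga_7$, and since $\vp$ is $\rho$-invariant, it suffices to compute only those $c_{ijk}$ in one representative of each $\rho$-orbit of index triples. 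Concretely, I would first verify $[\ga_1,\ga_2]$ (this is a bracket of $E_{23}-2E_{45}$ with $-E_{13}$, an easy $5\times 5$ matrix computation), decompose the result into the basis $\{\vep_1,\vep_2,\vep_3,\ga_1,\dots,\ga_7\}$ of $\so(5)$ using $\tr(\ga_i\ga_j)=-2\de_{ij}$ and $\tr(\ga_i\vep_j)=0$, and thereby get all $c_{12k}$ at once. Then I would compute $[\ga_1,\ga_7]$ and $[\ga_2,\ga_7]$ similarly; by the cyclic symmetry this also yields $[\ga_3,\ga_7],[\ga_4,\ga_7],[\ga_5,\ga_7],[\ga_6,\ga_7]$. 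The only remaining orbit representatives needing a fresh bracket are triples entirely among $\{\ga_1,\dots,\ga_6\}$ that are not $\rho$-translates of $\{1,2,k\}$, e.g. $[\ga_1,\ga_3]$, $[\ga_1,\ga_4]$, $[\ga_2,\ga_3]$, $[\ga_2,\ga_4]$; applying $\rho$ to these covers the rest.

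After assembling the nonzero $c_{ijk}$, I would check that (up to the overall factor $\frac1{10}$ and the chosen normalization $\tr(\ga_i\ga_j)=-2\de_{ij}$) the seven surviving terms are exactly $\ga^*_{127}$, $\ga^*_{135}$, $-\ga^*_{146}$, $-\ga^*_{236}$, $-\ga^*_{245}$, $\ga^*_{347}$, $\ga^*_{567}$, with all other $c_{ijk}$ vanishing. The vanishing of the rest can be argued partly by symmetry (any $\rho$-invariant 3-form must be a combination of the $\rho$-orbit sums of basic 3-forms, and the $G_2$-admissible one is pinned down by its stabilizer being $G_2$), and partly by the explicit bracket computations showing, for instance, that $[\ga_1,\ga_3]$ has no component along $\ga_i$ but lies in $\so(3)$, killing $c_{13k}$ for all $k$. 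As a consistency check I would confirm that the resulting 3-form has the standard $G_2$ normal form — it is visibly of the shape $\F_{135}-\F_{146}-\F_{236}-\F_{245}+dt\we(\cdots)$ seen in Section 3 under the relabelling $\ga_7\leftrightarrow dt$, $\ga_{2k-1}\leftrightarrow \F_{2k-1}$, $\ga_{2k}\leftrightarrow \F_{2k}$ — which both validates the $G_2$-admissibility claim and fixes the sign conventions.

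The main obstacle I anticipate is purely bookkeeping: keeping the signs straight across the many $5\times5$ commutators and, more subtly, correctly resolving each bracket into the $\so(3)\op\fm$ decomposition, since a bracket like $[\ga_i,\ga_j]$ will in general have a component in $\so(3)=\langle\vep_1,\vep_2,\vep_3\rangle$ that does not contribute to $\vp$ (as $\vp$ only involves the $\fm$-part via the dual basis $\ga_i^*$) — one must be careful not to let that component leak into the coefficients. The appearance of $a=\cos\tfrac{2\pi}3$, $b=\sin\tfrac{2\pi}3$ and the $\frac1{\sqrt6}$, $\frac1{\sqrt2}$, $\sqrt3$ factors means the intermediate expressions will involve these irrationals, but the identities $a=-\tfrac12$, $a^2+b^2=1$, $2ab=-\tfrac{\sqrt3}{2}$ should collapse everything to the clean integer coefficients $\pm1$ claimed. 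Using the cyclic action of $\rho$ aggressively keeps the number of independent brackets to compute down to a handful, which is what makes the calculation manageable rather than a brute-force slog.
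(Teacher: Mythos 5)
Your approach is the same as the paper's: the proposition is obtained by directly evaluating the structure constants $\tr([\ga_i,\ga_j]\ga_k)$ in the displayed formula $\frac1{10}\sum_{i<j<k}\tr([\ga_i,\ga_j]\ga_k)\,\ga_i^*\we\ga_j^*\we\ga_k^*$ from the explicit matrices \eqref{ga1-7}, and exploiting the cyclic action of $\rho$ to reduce the number of independent brackets is exactly the organisation the paper's setup invites (note also that the trace form automatically discards any $\so(3)$-component of a bracket, since $\tr(\ga_i\vep_j)=0$, so no extra care about ``leakage'' is needed). One caveat: your illustrative claim that $[\ga_1,\ga_3]$ lies entirely in $\so(3)$, killing $c_{13k}$ for all $k$, contradicts the very statement being proved, which contains the term $+\ga^*_{135}$, i.e.\ $\tr([\ga_1,\ga_3]\ga_5)\ne0$; that bracket must actually be computed rather than dismissed, so treat the vanishing of the remaining coefficients as an output of the calculation (or of the $\rho$-invariance argument) rather than of ad hoc claims of this kind.
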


\noindent Thus is in canonical form for a $G_2$ structure. Since $B^7$
has a unique $SO(5)$-invariant Riemannian metric $g_B$ (up to an
overall constant), it is a corollary of the proposition that
$(\ga_i^*)$ is an orthonormal basis for $g_B$.\vs

There is a cohomogeneity-one action of $SO(4)$ on $B^7$ \cite{PV}. The
principal orbits are $SO(4)/\Z_2^2$, and the singular orbits are both
isomorphic to $SO(4)/O(2)$. To analyse this, we start with bases
$(e_k)$, $(f_k)$, whose exterior derivatives are normalized as in
\eqref{eee}, \eqref{fff}. We then define positive and negative
combinations $\U_k=\half(f_k+e_k)$ and $\V_k=\half(-f_k+e_k),$ so that
\bel{VU} \left(\ns\ba{c} \U_k\y\V_k\ea\ns\right) \= L\left(\ns\ba{c}
f_k\y e_k\ea\ns\right),\qquad L= \half\left(\ns\ba{cc}
1&-1\y1&1\ea\ns\right)\ee for $k=1,2,3$. This switch of bases will
play a significant role in simplifying subsequent expressions, as we
are about to see. A coefficient of $1/2$ is chosen in the light of the
differential relations \eqref{dUV} below.

The Maurer-Cartan form
of the subgroup $SO(4)$ in $SO(5)$ can be expressed as
\bel{MC} A = 2
\left(\!\ba{ccccc}
0&0&0&0&0\\ 0&0&\V_3&\V_2&\V_1\\ 0&-\V_3&0&-\U_1&\U_2\\
0&-\V_2&\U_1&0&-\U_3\\ 0&-\V_1&-\U_2&\U_3&0,\ea\!\right)\ee
so that $e_1$ corresponds to the combination $-E_{25}-E_{34}$, and
$f_1$ to $E_{25}-E_{34}$, etc. These choices match the conventions
\eqref{eee},\eqref{fff} and (the subsequent) \eqref{dUV}.

The pullback of $A$ on $[0,\frac\pi3]\times SO(4)$ is given by
\[ R(t)^{-1}\!AR(t) + \ga_7^*\we dt=
\left(\!\ba{ccccc}
0 & dt  & -s\V_1 & -s\V_3 & -s\V_2\yy
-dt & 0 & c\V_1 & c\V_3 & c\V_2\yy
s\V_1 & -c\V_1 & 0 & -\U_2 & \U_3\yy
s\V_2 & -c\V_3  & \U_2  & 0 & -\U_1\yy
s\V_3 & -c\V_2 & -\U_3 & \U_1 & 0\ea\!\right)\]
where $c=\cos t$ and $s=\sin t$ as temporary abbreviations.

Let $\G_i$ denote the pullback of $2\ga_i^*$. These 1-forms satisfy
\[\ba{rcl}
\G_1 &=& \la(c\V_1+2\U_1)\\
\G_2 &=& 2s\V_1\\
\G_3 &=& \la((ca-sb)\V_2+2\U_2)\y
\G_4 &=& 2(sa+cb)\V_2\y
\G_5 &=& \la((ca+sb)\V_3+2\U_3)\y
\G_6 &=& 2(sa-cb)\V_3\y
\G_7 &=& 2dt
\ea\]
where $\la=2/\5$ and (as before) $a=\cos(2\pi/3)$ and $b=\sin(2\pi/3)$.

These equations manifest the $\Z_3$ symmetry arising from the action
of the Weyl group for the action of $SO(4)$ \cite{GWZ}, and can be
restated as\vs

\begin{cor}\label{GG} Let $\ka=1$ and $\la=2/\5$. Then
\be
\left(\ns\ba{c} \G_{2k-1}\y \G_{2k}\ea\ns\right) \=
\left(\ba{cc} 2\la & \la c_k \yy 0 & 2s_k\ea\right)
\left(\ns\ba{c} \U_k\y\V_k\ea\ns\right)\ee
for $k=1,2,3$.
\end{cor}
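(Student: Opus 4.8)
The corollary repackages six scalar identities, so the plan has two stages: first obtain the pulled-back 1-forms $\G_1,\dots,\G_6$ on the principal part of $B^7$ as explicit combinations of $\U_k,\V_k$ with trigonometric coefficients, and then recognize those coefficients as the $c_k=\cos(t+\vt_k)$, $s_k=\sin(t+\vt_k)$ of \eqref{pcs} so that the relations fall into $2\times2$ blocks.

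For the first stage I would pull back the left Maurer-Cartan form of $SO(5)$ along the cohomogeneity-one parametrization $(t,g)\mapsto g\,R(t)$ of $B^7=SO(5)/SO(3)$. This pullback is $R(t)^{-1}AR(t)+\ga_7\,dt$, where $A$ is the Maurer-Cartan form \eqref{MC} of $SO(4)\subset SO(5)$ and the second term appears because the $\{3,4,5\}$-block of $R(t)$ is a fixed permutation matrix, so $R(t)^{-1}dR(t)=E_{12}\,dt=\ga_7\,dt$. Using $\tr(\ga_i\ga_j)=-2\de_{ij}$ and $\tr(\ga_i\vep_j)=0$, the coefficient of $\ga_i$ in this pullback --- which, with the scale pinned by $\ka=1$, is $\G_i$ --- equals $-\tfrac12\tr\!\big(A\cdot R(t)\ga_iR(t)^{-1}\big)$ for $i\le6$. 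The one genuine computation is $R(t)\ga_iR(t)^{-1}$: since $R(t)$ merely rotates the $\{1,2\}$-plane through $t$ and permutes the indices $\{3,4,5\}$, each conjugate is again a two- or three-term combination of elementary skew matrices $E_{ab}$ with coefficients among $1,\cos t,\sin t$, and contracting against $A=2(\dots\V_j,\U_j\dots)$ reads off $\G_1=\la(c\V_1+2\U_1)$, $\G_2=2s\V_1$, and cyclically $\G_3,\dots,\G_6$. Here the factor $\la=2/\5$ comes from the $1/\5$ built into the odd-indexed basis vectors of \eqref{ga1-7} --- the even-indexed ones carry no such factor, which is why $\G_{2k}$ has coefficient $2s_k$ and no $\la$ --- while the three pairs are organized by the $\Z_3$-symmetry: $\rho=R(\9)$ cycles the pairs $(\ga_{2k-1},\ga_{2k})$ and the $\vep_i$, equivalently the constants $a=\cos\9$, $b=\sin\9$ sitting inside $\ga_3$ and $\ga_5$ convert $\cos t,\sin t$ into $\cos(t\pm\9),\sin(t\pm\9)$, while the permutation in $R(t)$ routes the outcome onto the correct pair $(\U_k,\V_k)$. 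One can either exploit this symmetry to reduce the work to the seed $\ga_1,\ga_2$ (taking care how the fixed permutation block of $R(t)$ behaves under composition) or simply repeat the elementary computation three times.

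For the second stage, set $a=\cos\9$, $b=\sin\9$ and $\vt_k=(k-1)\9$ as in \eqref{pcs}. The addition formula gives $c=c_1$, $s=s_1$, $ca-sb=c_2$, $sa+cb=s_2$, and, using $\cos\tfrac{4\pi}3=a$ and $\sin\tfrac{4\pi}3=-b$, also $ca+sb=c_3$ and $sa-cb=s_3$. Substituting these into the six scalar relations and collecting each pair $(\G_{2k-1},\G_{2k})$ yields the asserted upper-triangular $2\times2$ matrix identity; its cleaner shape --- with $\la c_k$ off the diagonal rather than the $\la(2c_k+1)$ of Corollary \ref{FF} --- is precisely the dividend of having passed from the basis $(f_k,e_k)$ to $(\U_k,\V_k)$ via \eqref{VU}. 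The main obstacle is the conjugation-and-trace bookkeeping of the first stage: keeping every sign, index permutation and $1/\5$ normalization straight when decomposing $R(t)^{-1}AR(t)+\ga_7\,dt$ along the basis $\{\ga_1,\dots,\ga_7,\vep_1,\vep_2,\vep_3\}$. Once the six scalar formulas are in hand, the corollary is immediate.
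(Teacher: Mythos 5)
Your proposal is correct and follows essentially the same route as the paper: pull back the Maurer--Cartan form as $R(t)^{-1}AR(t)+\ga_7^*\we dt$, read off the six scalar identities expressing $\G_1,\dots,\G_6$ in terms of $\U_k,\V_k$ with coefficients built from $\cos t,\sin t$ and $a=\cos\frac{2\pi}3$, $b=\sin\frac{2\pi}3$, and then apply the addition formulas to recognize these coefficients as $c_k,s_k$. The one small caveat is a normalization: since $\G_i$ is by definition the pullback of $2\ga_i^*$, the relevant coefficient is $-\tr\big(A\cdot R(t)\ga_iR(t)^{-1}\big)$ rather than $-\tfrac12\tr(\cdot)$, but the final scalar formulas you state coincide with the paper's and the trigonometric repackaging into the $2\times2$ blocks is exactly as intended.
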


The 3-form from Proposition \ref{127} defining the \2 structure on
$B^7$ can be written \[\vp\= \xi\we \G_7 + \Re\Xi,\] where $\Xi$ is
defined by \eqref{xiX} with $\G_i$ in place of $\F_i$. If we denote
the $2\times2$ matrix in Corollary \ref{GG} by $M_k=M_k(t),$ the
analogue of \eqref{block} for the term of $g_B$ involving the plane
$\left<f_1,e_1\right>$ equals \bel{LM} L^\top\!M_1^\top\!M_1L =
\frac15\left(\ns\ba{ccc} 5\+4\sin^2t\+4\cos t && 1\-4\cos^2t\yy
1\-4\cos^2t && 5\+4\sin^2t\-4\cos t\ea\ns\right).\ee The change of
basis \eqref{VU} reveals a striking similarity between Corollary
\ref{FF} and Corollary \ref{GG}, which we explore next.

The 1-forms in \eqref{VU} satisfy
\bel{dUV}\left\{\ba{rcl} d\U_k &=&
-2(\U_{ij}+\V_{ij})\y d\V_k &=& -2(\U_i\V_j-\U_j\V_i)\ea\right.\ee
for $(ijk)=(123)$. If $\na$ is a connection on $T^*SO(4)$ that
satisfies
\[\na \V_k \= \U_i\ot \V_j-\U_j\ot \V_i\]
then we may regard the $\U_i$ as connection 1-forms on $\cD$,
whose curvature 2-forms are
\[ d\U_k+2\U_{ij}\= -2\V_{ij}.\]
The second equation of \eqref{dUV} implies that
\(\cD=\left<\G_2,\G_4,\G_6\right>\) is a differential ideal in
$SO(4)$. Regarding $\cD$ as a subspace of $\so(4)^*,$ its annihilator
is the diagonal subalgebra
\[ \so(3)_d\>\subset\>\so(3)_++\so(3)_-=\so(4)\]
represented by the bottom $3\times3$ block in \eqref{MC}.  This
subalgebra corresponds to the choice of isometry \(\si\colon
\left<e_1,e_2,e_3\right>\to\left<f_1,f_2,f_3\right>\) defined by our
indexing, or (up to sign) the vector in $\R^4$ that is fixed by our
choice of geodesic.

Inside each generic $SO(4)$ orbit, the integral submanifolds of $\cD$
consist of $\Z_2^2$ quotients of conjugacy classes of a diagonal
$SO(3)_d$, and are parametrized by $SO(4)/SO(3)_d\cong S^3$. Moreover,
\[\Im\Xi=-Y_{246}+Y_{235}+Y_{145}+Y_{136}\]
vanishes on these submanifolds, and they are associative (see
\eqref{xiX}). They play a role analogous to that of the orbits of
$SU(2)^-$ acting on $S^7$, insofar as the homogeneous \2 metrics are
concerned. Note that the complete theory of associative submanifolds
of $B^7$ is very rich because of the many different orbit types of a
principal $SO(3)$ subgroup of $G_2$ acting on the space $G_2/SO(4)$ of
associative subspaces of $\R^7$ \cite{BM1}.

\begin{rem}\rm
  In the wider homogeneous context, conjugacy classes of 3-dimensional
  subalgebras of $\so(5)$ (namely $\su(2)^-$, $\so(3)_d$ and principal
  $\so(3)$) correspond to critical submanifolds (namely $S^4$,
  $\Gr_3(\so(5))$ and $B^7$) for the natural Morse-Bott functional on
  the 21-dimensional Grassmannian $\Gr_3(\so(5))$. Two associated
  unstable manifolds of dimensions 8 and 12 then admit incomplete
  quaternion-K\"ahler metrics by the theory in \cite{KS}, and it is
  tempting to tie this in with the results of this article.
\end{rem}

The behaviour of the 1-forms $Y_i$ at $t=0$ and $\pi/3$ gives some
information about the singular locus and $SO(4)$ action. For
example, $Y_2$ vanishes at $t=0$, so the stabilizer $O(2)$ is tangent
to the annhilator of $\left<Y_1,Y_3,Y_4,Y_5,Y_6\right>$. At our
distinguished point, $SO(2)$ rotates $\left<E_4,E_5\right>$ by
$\theta$ and $\left<E_2,E_3\right>$ by $2\theta$. Its induced action
on \eqref{+-} fixes $E_{23}\pm E_{45}$, but rotates
$\left<E_{24}+E_{53},E_{25}+E_{34}\right>$ by $3\theta$, whereas
$\left<E_{24}-E_{53},E_{25}-E_{34}\right>$ by $-\theta$. The singular
orbit $SO(4)/O(2)$ is therefore covered by the circle bundle in
$\cO(3,1)\to\CP^1\times\CP^1$.
  
The subgroup $SU(2)^+$ of $SO(4)$ (defined by our choice of oriented
basis) has stabilizer $\Z_3$ at $t=0$, but $SU(2)^-$ acts freely. The
opposite is true at $t=\pi/3$, giving rise to a group diagram with
weights $(3,1)$ at $t=0$, and $(1,3)$ at $t=\pi/3$, relative to
$SO(4)=SO(3)^+SO(3)^-$. It follows that $B^7/SU(2)^+$ is smooth for
$t\in(0,\pi/3]$, but not at $t=0$. Since $SO(4)/SU(2)^+\cong SO(3)$,
  the commutator $SU(2)^-$ acts on the quotient with generic
  stabilizer $V=\Z_2^2$. This observation leads to the
  $SO(4)$-equivariant diffeomorphism $B^7\cong\tP^+\kr\cO_5$
  established by Grove, Wilking, and Ziller \cite{GWZ}, where $\cO_k$
  is Hitchin's orbifold with a $\Z_{k-2}$ singularity transverse to
  $\RP^2$ (we use $k\ge3$ as in \cite{H}). We outline this setup next
  because of its implications for \2 structures with an $SO(4)$
  symmetry, working backwards from $\cO_k$. Recall our convention that
  a self-dual conformal structure has vanishing Weyl component $W_-$
  (as for $\CP^2$ whose K\"ahler 2-form $\omega$ lies in $\La^+$).

The irreducible action of $SO(3)=G$ on $S^4$ has generic orbit $G/V$,
and singular orbits $\cL\cong G/L$ and $\cR\cong G/R$ where $L$ and
$R$ are copies of $O(2)$ in different blocks of
$SO(3)$. Topologically, $\cO_k$ is constructed by gluing disk bundles
over $\cL$ and $\cR$, each a copy of $\RP^2$. If we choose $\cL$ to be
the singular locus, then
\[\cO_k\ \approx\ (G\times\sb{L/\ell}(D^2/\ell)\bigcup_{G/V}\>
(G\times\sb RD^2),\] and this space is still homeomorphic to $S^4$.
The subgroups $L,R$ act on the tangent and normal spaces $T,T^\perp$
as $(e^{\ell i\theta},e^{-2i\theta})$ at $x\in\cL$, and as
$(e^{i\theta},e^{2i\theta})$ at $y\in\cR$. The factors of $2$ are
needed to match the generic stabilizer $V$. The actions on $\La^+(T\op
T^\perp)$ are therefore $e^{(\ell-2)i\theta}$ at $x$ and
$e^{3i\theta}$ at $y$. The inclusion \bel{wts} U(1)\subset L\subset
SO(3)\times SU(2)^+SU(2)^-\ee is given by $(e^{\ell i\theta},
e^{(\ell-2)i\theta},e^{(\ell+2)i\theta})$ at $x$, and the differing
weights $\ell$, $\ell+2$ at $x$ show that the $SO(3)^+$ principal
bundle $P^+\kr\cO_k$ is smooth. The same is true of $P^-\kr\cO_k$,
though the SD/ASD distinction can be verified by considering $\CP^2$
for which $\La^+=\left<\omega\right>\op[\![\La^{2,0}]\!]$ extends the
canonical line bundle. The singular locus $\cL$ of $\cO_4=\CP^2/\Z_2$
is totally real, reflecting the weight $\ell-2=0$. When $\ell=3$, the
group diagram for $\tP^+\kr\cO_5$ with weights $(3,1)$ from
\eqref{wts} and (at $\cR$) $(1,3)$ coincides with that of $B^7$, if we
ignore the third factor in \eqref{wts}, and let $SO(3)$ play the role
of $SU(2)^-$ on $B^7$. Whence the equivalence $\tP^+\kr\cO_5\cong
B^7$.

\begin{rem}\rm Although $B^7$ cannot be the total space of a smooth
principal $S^3$ bundle, it is known to be diffeomorphic to a smooth
$S^3$ bundle over $S^4$ with Euler class $\pm10$ and (in particular)
$p_1=\pm16$ \cite{GKS}.\end{rem}

The orbifolds $\cO_k$ arise from those on the moduli spaces of
monopoles of charge 2 and mass 1 on hyperbolic space
$\cH^3(-16/(k-2)^2)$ for $k\ge3$. Their metrics can (in theory, all)
be constructed from solutions of Painlev\'e VI, and approach the
hyperk\"ahler metric on the Atiyah-Hitchin space $M^2_0$ \cite{H,AH}
as $k\to\8$. Explicit algebraic expressions and fascinating graphs for
$\ell=3,4,6$ can be found in \cite{Z}. The Konishi bundle
$P^-\kr\cO_k$ inherits an $SO(4)$-invariant 3-Sasakian
metric. Therefore both $P^-\kr\cO_5$ and $P^+\cO_5$ admit (albeit,
very different) \2 structures, and it is natural to ask whether the
same could be true for $k>5$. There is an analogous question for
positive sectional curvature, since modification of the 3-Sasakian
metric on $P^-\kr\cO_5$ has this property for $k=5$ \cite{GWZ,De2}.

\section{Hypersurface calculations}

In the setup of \eqref{xiX}, we resort to the description of an \2
manifold by means of the induced $SU(3)$ structures on the
hypersurfaces for which $t\in(0,\pi/3)$ is constant, so the generic
$SO(4)$ orbits. The condition \eqref{NP} is equivalent to the next two
equations: \bel{sys1}\ba{rcl} d\xi-\dt\Re\Xi &=& \npc\,\Im\Xi\ea\ee
\vspace{-20pt}
\bel{sys2}\ba{rcl}
d(\Re\Xi) &=& \npc\,\half\xi^2.\ea\ee
The second characterizes the so-called nearly-half-flat (NHF)
condition introduced by \cite{FIMU}, and studied in the present context by
the second author \cite{Si}.

Recall the \2 structure on $S^7$ determined by the the basis $(\F_i)$
and the $2\times2$ matrices that feature in Corollary \ref{FF}. Let us
just consider 6-dimensional orbits of $SO(4)$ with slightly more
general $SU(3)$ structures. The next result shows that it is to some
extent possible to reverse engineer the constants occurring in
Corollary \ref{FF} by merely applying the NHF condition for all values
of $t$:\vs

\begin{prop}\label{nhf} Define $\xi$ and $\Xi$ by \eqref{xiX} with
\[\left(\ns\ba{c} \F_{2k-1}\y \F_{2k}\ea\ns\right) \= \left(\ba{cc} \la
& \la(ac_k+b)\yy 0 & 4s_k\ea\right) \left(\ns\ba{c} f_k\y
e_k\ea\ns\right),\] where $\la,a,b$ are (unconstrained)
constants. Then \eqref{sys2} holds for all $t$ iff
\begin{enumerate}[itemsep=0pt,topsep=-6pt]
  \item[\rm(i)] $(a,b)=(0,-1)$ or $(0,0),$ in which case $\npc=-2/\la,$ or
  \item[\rm(ii)] $(a,b)=(\pm2,1)$, in which case $\npc=-\frac12(\la^2+4)/\la$.
\end{enumerate}
\end{prop}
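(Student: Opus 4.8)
The plan is to compute $d(\Re\Xi)$ and $\xi^2$ explicitly as differential forms on the 7-manifold (equivalently, on $[0,\pi/3]\times SO(4)$) in terms of the basis $(f_k),(e_k)$, and then match coefficients as polynomials in the parameters. First I would record the structure equations: from \eqref{fff} and \eqref{eee} we have $df_k=-2f_{ij}$ and $de_k=-2e_{ij}$ for $(ijk)=(123)$. Writing $\F_{2k-1}=\la f_k+\la(ac_k+b)e_k$ and $\F_{2k}=4s_ke_k$, I would expand $\Re\Xi=\F_{135}-\F_{146}-\F_{236}-\F_{245}$ and $\xi^2=2(\F_{1234}+\F_{1256}+\F_{3456})$ directly in terms of $f_i,e_i$. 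The key arithmetic inputs are the trigonometric identities \eqref{cs}, \eqref{ccss}, together with $\dot c_k=-s_k$, $\dot s_k=c_k$, so that when I apply $\dt$ to the coefficients of $\Re\Xi$ I only ever reintroduce $c_k,s_k$ in a controlled way.

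Next I would carry out the two sides of \eqref{sys2}. Since $\F_{2k}=4s_ke_k$ is "spread" between $dt$ and $e_k$ (because $d(s_ke_k)=c_k\,dt\we e_k-2s_ke_{ij}$), and $\F_{2k-1}$ contains the $f_k$ whose differential is purely $f_{ij}$, the 4-form $d(\Re\Xi)$ naturally splits into a part proportional to $dt$ and a "spatial" part. Likewise $\xi^2$ has a $dt$-part and a spatial part. I expect that \eqref{sys2} decomposes into two families of scalar equations: one set coming from the $dt\we(\cdots)$ components and one from the purely spatial components, each of which must hold identically in $t$. Because everything is built from $c_k,s_k$ with $\sum c_k=\sum s_k=0$ and $c_jc_k+s_js_k=-\tfrac12$, a form like $dt\we f_i\we e_j\we e_k$ will carry a coefficient that is a low-degree trigonometric polynomial in $t$; setting its Fourier coefficients to zero (or using the independence of $1,c_k,s_k,c_k^2,\dots$ over the relevant index range) turns the identity-in-$t$ requirement into a finite system of polynomial equations in $\la,a,b,\npc$.

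The main obstacle will be the bookkeeping: keeping track of which monomials $dt\we f_I\we e_J$ appear, with the correct signs from wedging and from the cyclic $(ijk)=(123)$ conventions, and then correctly collecting the trigonometric coefficients using \eqref{cs}–\eqref{ccss}. I would organize this by exploiting the $\Z_3$ cyclic symmetry $k\mapsto k+1$ (which permutes $(\F_{2k-1},\F_{2k})$ and sends $c_k\mapsto c_{k+1}$, etc.), so that it suffices to compute the coefficient of one representative monomial in each $\Z_3$-orbit and then symmetrize. Concretely I anticipate the spatial part of \eqref{sys2} forces a relation that, after using \eqref{ccss}, reads as a quadratic constraint linking $a$ and $b$ — this is where the dichotomy "$a=0$" versus "$b=1$" emerges — while the $dt$-part pins down $\npc$ in terms of $\la$ (and $a,b$).

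Finally I would solve the resulting system. In case (i), $a=0$ makes the $f$–$e$ cross terms degenerate and the equation collapses to a linear condition on $b$ giving $b\in\{0,-1\}$ with $\npc=-2/\la$; in case (ii), the constraint becomes $a^2=4$ together with $b=1$, and back-substitution into the $dt$-equation yields $\npc=-\tfrac12(\la^2+4)/\la$. I would close by noting consistency with Corollary \ref{FF}: there $a=2,b=1,\la=2/\sqrt5$, which lies in case (ii) and gives $\npc=-\tfrac12(4/5+4)/(2/\sqrt5)=-6/\sqrt5$, matching Proposition \ref{2.1} up to the sign fixed by orientation. No step requires anything beyond the structure equations \eqref{fff}, \eqref{eee}, the trigonometric identities \eqref{cs}–\eqref{ccss}, and linear algebra in the space of 4-forms, so the only real risk is a sign or combinatorial slip in the expansion, which the $\Z_3$-symmetry cross-check should catch.
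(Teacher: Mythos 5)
Your overall strategy---expanding $\Re\Xi$ and $\xi^2$ in the $(f_k),(e_k)$ basis via \eqref{eee} and \eqref{fff}, invoking the identities \eqref{cs} and \eqref{ccss}, exploiting the $\Z_3$ symmetry, and matching trigonometric coefficients to obtain the dichotomy $a=0$ versus $b=1$---is exactly the paper's route, and your end values (including the consistency check against Corollary \ref{FF}) are correct. However, there is one concrete error in the plan: the treatment of the $dt$-components. In \eqref{sys2} the exterior derivative is the one intrinsic to the $6$-dimensional orbit $\{t=\mathrm{const}\}$; the normal derivative enters the system only through the explicit term $\dt\Re\Xi$ in \eqref{sys1}. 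Consequently there are no $dt$-components on either side of \eqref{sys2}: $\xi=\F_{12}+\F_{34}+\F_{56}$ is built entirely from the spatial $1$-forms $e_k,f_k$ (with $t$-dependent coefficients), so $\xi^2$ has no $dt$-part, contrary to your claim; and in computing $d(\Re\Xi)$ one must discard the terms $dt\we e_k$ arising from $d(s_ke_k)$ and $d(c_ke_k)$. If you instead keep those terms and try to match them against $\npc\,\half\xi^2$, you are forced to impose $\dt\Re\Xi=0$, which is false for the actual solutions (by \eqref{sys1} it equals $d\xi-\npc\,\Im\Xi\neq0$), and your system would then have no solutions at all.

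In particular, $\npc$ is not ``pinned down by the $dt$-part'' but by the purely spatial matching: the coefficient of $e_{23}\we f_{23}$ (and its cyclic images) in $d(\Re\Xi)$ must equal $-16\la^2\npc\,s_2s_3=8\la^2\npc(1+2c_2c_3)$, and comparing the constant term, the $c_2c_3$ term, and the $c_1$ term of that single component already yields both the case split $a(b-1)=0$ and the stated values of $\npc$. With that correction your plan goes through essentially as in the paper.
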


\noindent In this formulation, the coefficient `4' is retained as a
normalization.\vs

\begin{proof} Equations \eqref{xiX} yield 
\( \xi\= -4\la(s_1e_1f_1+s_2e_2f_2+s_3e_3f_3)\)
so
\[\half\xi^2 \=
-16\la^2(s_2s_3e_{23}f_{23}+s_3s_1e_{31}f_{31}+s_1s_2e_{12}f_{12}).\]
Moreover
\[\ba{rcl} d\,\Xi &=&\ds
\left(-2\la f_{23}-2\la(ac_1\+b)e_{23}-8is_1e_{23}\right)\we\y
&&\ds\phantom{mmm} (\la f_2+\la(ac_2\+b)e_2+4is_2e_2)\we
(\la f_3+\la(ac_3\+b)e_3+4is_3e_3)+\cdots\ea\]
Therefore the $e_{23}f_{23}$ component, call it $\Ga_1,$ of
$d(\Re\Xi)$ equals
\[\ba{rcl} 
&& -2\la[\la^2(ac_2\+b)(ac_3\+b)-16s_2s_3]-2\la^3(ac_1\+b)\y
&=& -2\la^3[a^2c_2c_3-abc_1+b^2]+32\la s_2s_3-
  2\la^3(ac_1+b)\y
&=& -2\la^3(b+b^2)-2\la^3a^2c_2c_3+32\la s_2s_3+2\la^3(ab-a)c_1\y
  &=& -2\la^3(b+b^2)-16\la-2\la(16+a^2\la^2)c_2c_3+2\la^3(ab-a)c_1,\ea\]
the last line by \eqref{ccss}.
This must equal $\mu$ times
$-16\la^2s_2s_3=8\la^2(1+2c_2c_3)$ for all $t$.
To eliminate the coefficient of $c_1$ above, we need $a=0$ or $b=1$.

If $a=0,$ we obtain the system

\[\left\{\ba{l} -32\la=16\la^2\npc\y
-2\la^3b(1+b)-16\la=8\la^2\npc,\ea\right.\] which implies that
$\la\mu=-2$ and $b=0$ or $-1$. 
If $b=1,$ we obtain
\[-4\la^3-2\la(8+a^2\la^2)c_2c_3-8\la\= \npc(8a^2\la^2+16c_2c_3),\]
which provides the system
\[\left\{\ba{l} \la^2+2\npc\la+4=0\y a^2\la^2+2\npc\la+4=0,\ea\right.\]
and $a=\pm2$. The unique quadratic then gives the constraint between
$\la$ and $\npc$.
\end{proof}

\noindent As a corollary, any positive definite linear combination of
$g_7$ and $\tg_7$ on $S^7$ is associated to an NHF structure. Note
however that $\la=2/\5$ implies that $\npc=-6/\5,$ as in Proposition
\ref{2.1} modulo orientation.\vs

The analysis of \eqref{sys1} requires more extensive
calculation. Turning to $B^7,$ we have established that the solution
in Corollary \ref{GG} is essentially unique within a 3-parameter
family. This is part (i) of
  
\begin{theorem}\label{main}
Define $\xi$ and $\Xi$ by \eqref{xiX} with $\G_i$ in place of
  $\F_i,$ where now
\[\left(\ns\ba{c} \G_{2k-1}\y \G_{2k}\ea\ns\right) \=
\left(\ba{cc} 2\la & 2\la a c_k \yy b & 2s_k\ea\right)
\left(\ns\ba{c}\U_k\y\V_k\ea\ns\right),\] and $\la,a,b$ are
(unconstrained) constants. Then
\begin{enumerate}[itemsep=0pt,topsep=0pt]
\item[\rm(i)] \eqref{sys1} and \eqref{sys2} both
  hold iff $(\la,a,b)=(\pm\frac2\5,\frac12,0)$;
\item[\rm(ii)]  \eqref{sys2} holds if either
  $\la^2(4-a^2)=3$ and $b=0$, or $(\la,a,b)=(\frac12,2,\pm\sqrt3)$.
\end{enumerate}
In all cases, $\mu$ is determined by $\la$.
\end{theorem}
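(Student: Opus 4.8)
The plan is to run the same kind of explicit computation that proved Proposition \ref{nhf}, but now on $B^7$ using the differential relations \eqref{dUV} in place of \eqref{eee}, \eqref{fff}. First I would record the basic building blocks: from the given $2\times2$ matrices, $\xi=\G_{12}+\G_{34}+\G_{56}$ becomes $\xi = 2\la\sum_k s_k\,\U_k\we\V_k \cdot(\text{sign}) + (\text{terms in }\U_k\we\U_k)$; more carefully, $\G_{2k-1}\we\G_{2k} = (2\la\U_k+2\la a c_k\V_k)\we(b\U_k+2s_k\V_k) = 4\la s_k\,\U_k\we\V_k - 2\la a b c_k\,\U_k\we\V_k = 2\la(2s_k-abc_k)\,\U_k\we\V_k$, so $\xi$ is a combination of the three 2-forms $\U_k\we\V_k$. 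Squaring and using $\U_i\we\V_j\we\U_i\we\V_j=0$ gives $\tfrac12\xi^2$ as a sum of the three 4-forms $(\U_{ij}+\V_{ij}-\text{stuff})$; the exact shape is dictated by \eqref{dUV}. Similarly $\Xi = \prod_k(\G_{2k-1}+i\G_{2k})$ expands into a sum of products of $\U$'s and $\V$'s with trigonometric and $(a,b,\la)$-dependent coefficients, and I would isolate $\Re\Xi$ and $\Im\Xi$ in terms of the monomials $\U_{123}$, $\U_{12}\V_3$ (and cyclic), $\U_1\V_{23}$ (and cyclic), $\V_{123}$.

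Next I would differentiate. Using $d\U_k = -2(\U_{ij}+\V_{ij})$ and $d\V_k = -2(\U_i\V_j - \U_j\V_i)$, together with the $t$-derivative acting on the $c_k,s_k$ via $\dot c_k=-s_k$, $\dot s_k=c_k$, I compute $d\xi$, $\dt\Re\Xi$, $d(\Re\Xi)$. For \eqref{sys2} I match $d(\Re\Xi)$ against $\mu\tfrac12\xi^2$ coefficient-by-coefficient in the monomial basis above; because everything is polynomial in $c_k,s_k$ subject only to \eqref{cs} and \eqref{ccss}, each monomial equation splits into a $t$-independent part and the coefficient of a single $c_j$ or $c_jc_k$, exactly as in the proof of Proposition \ref{nhf}. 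Eliminating the genuinely $t$-dependent pieces forces relations among $a,b,\la,\mu$; I expect this to yield precisely the two alternatives in (ii): $b=0$ with $\la^2(4-a^2)=3$ (the family containing $(\pm2/\5,\tfrac12,0)$), or $(\la,a,b)=(\tfrac12,2,\pm\sqrt3)$, with $\mu$ pinned down by $\la$ in each branch — the appearance of the factor $4-a^2$ traceable to the $\V_k$-contributions that distinguish this from the $S^7$ case. Then, imposing \eqref{sys1} in addition, I match $d\xi - \dt\Re\Xi$ against $\mu\,\Im\Xi$ in the same basis; the extra constraints kill the $(\tfrac12,2,\pm\sqrt3)$ branch and collapse the one-parameter family $\la^2(4-a^2)=3$ to the single point $a=\tfrac12$ (forcing $\la^2=4/5$), giving (i). Consistency with Corollary \ref{GG} and Proposition \ref{127} is the sanity check that $(\la,a,b)=(2/\5,\tfrac12,0)$ indeed reproduces the homogeneous structure.

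The main obstacle is bookkeeping rather than conceptual: $\Xi$ is a product of three complex 1-forms each of which is a combination of $\U_k$ and $\V_k$, so $\Re\Xi$ and $d(\Re\Xi)$ have many terms, and one must be scrupulous about signs coming from reordering wedge products and from the cross-product structure hidden in \eqref{dUV}. The delicate point is that, unlike the $S^7$ computation where $f_k,e_k$ satisfy the simple relations \eqref{eee}, \eqref{fff}, here $d\U_k$ and $d\V_k$ couple the two families, so $d(\Re\Xi)$ produces $\V_{123}$-type terms and cross terms that have no analogue in Proposition \ref{nhf}; it is exactly these that generate the quantity $4-a^2$ and must be tracked to get part (ii) right. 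I would organize the calculation by first expressing $\Xi$ in the $\U,\V$ monomial basis with symbolic coefficients $P_0, P_1(t),\dots$, differentiate once in that basis, and only at the end substitute the identities \eqref{cs}, \eqref{ccss} to reduce the trigonometric coefficients — this keeps the algebra honest and makes the coefficient-matching mechanical. Given the length, in the written proof I would present the $\xi^2$ computation and the reduction of the key $\V_{123}$ and $c_jc_k$ coefficients in \eqref{sys2}, state the resulting polynomial system, solve it, and then indicate (without full detail) the extra equations from \eqref{sys1} that single out (i).
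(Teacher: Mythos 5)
Your plan coincides with the paper's own (largely omitted) argument: the authors prove (ii) by exactly the coefficient-matching computation of Proposition \ref{nhf} transplanted to the basis $\U_k,\V_k$ via \eqref{dUV}, decline for the same reason as you to write out the full \eqref{sys1} analysis needed for (i), and record that both parts were checked by hand and with \emph{Maple}. The only slip in your sketch is cosmetic: $\tfrac12\xi^2$ is a combination of the monomials $\U_{ij}\we\V_{ij}$ by pure wedge algebra, so its shape is not ``dictated by \eqref{dUV}''; those relations enter only when you differentiate $\xi$ and $\Re\Xi$.
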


\noindent We omit the proof of (i), since analysis of \eqref{sys1} is
somewhat fruitless. The proof of (ii) is analogous to that of
Proposition \ref{nhf}, but more involved since it is based upon
\eqref{dUV}. Both parts were checked by hand and using \emph{Maple},
but in different orders.\vs

The significance of the condition $b=0$ is that it guarantees smooth
extension to the singular orbits. This justifies our presentation of
the $S^7$ and $B^7$ solutions using upper triangular $2\times2$
matrices, and contrasting geometry at the singular orbits. The special
NHF solutions in this section will be analysed elsewhere.

There is an obvious generalization of the hypotheses of Theorem
\ref{main} that will also capture the \2 solution on the principal
part $SO(4)\times(0,\pi/3)$ of $B^7$. Namely one can saturate the
first row of the $2\times2$ matrix by linear combinations $1$ and
$c_k$, and its second row by terms proportional to $s_k$. For that
matter, one could insert less trivial Fourier series in each slot. The
prospect of adapting the methods to \cite{FH} to find new compact \2
manifolds of cohomogeneity one appears remote. Nonetheless, the
results of this section motivate a computational search for other \2
solutions of the above type.

\small

\section*{Acknowledgements}

Both authors were supported by the Simons Foundation (\#488635 Simon
Salamon). Some of the results of this paper were presented at the
conference `Special Holonomy and Geometric Structures on Complex
Manifolds' at IMPA in March 2024, and we thank the organizers.

\vs\vs\flushleft\footnotesize

University of M\"unster, 
Einsteinstrasse 62, 48149 M\"unster, Germany.\\
\texttt{rsinghal@uni-muenster.de}\yy

King's College London, Strand, London, WC2R 2LS, UK.\\
\texttt{simon.salamon@kcl.ac.uk}

\enddocument